\title{The logarithmic Minkowski problem}
\author{K\'aroly J. B\"or\"oczky}
\address{Alfr\'ed R\'enyi Institute of Mathematics
 Hungarian Academy of Sciences}
\email{carlos@renyi.hu}
\author{Erwin Lutwak}
\address{Polytechnic Institute of New York University,
Brooklyn, New York}
\email{elutwak@poly.edu}
\author{Deane Yang}\address{Polytechnic Institute of New York University,
Brooklyn, New York}
\email{dyang@poly.edu}
\author{Gaoyong Zhang}
\address{Polytechnic Institute of New York University,
Brooklyn, New York}
\email{gzhang@poly.edu}
\subjclass{52A40}
\keywords{cone-volume measure, Minkowski problem, $L_p$-Minkowski
problem, log-Minkowski problem}
\thanks{Research of the first author supported, in part, by EU FP7 IEF
grant GEOSUMSET and OTKA 075016. Research of the other three authors
supported, in part, by
NSF Grant DMS-1007347.}
\newcommand{\R}{\mathbb{R}}
\newcommand{\rn}{\mathbb R^n}
\newcommand{\sn}{S^{n-1}}
\newcommand{\psum}{{+_{\negthinspace\kern-2pt p}}\,}
\newcommand{\dpsum}{{\tilde+_{\negthinspace\kern-1pt p}}\,}
\newcommand{\lsub}[1]{\hskip -1.5pt\lower.5ex\hbox{$_{#1}$}}
\newcommand{\cK}{{\mathcal K}}
\newcommand{\cH}{{\mathcal H}}
\newcommand{\dimension}{\operatorname{dim}}
\newcommand{\relinterior}{\operatorname{relint}}
\newcommand{\SL}{\operatorname{SL}}
\newcommand{\GL}{\operatorname{GL}}
\newcommand{\piper}{\operatorname{P}\negthinspace\negthinspace}
\newcommand{\ess}{\operatorname{S}}
\begin{document}

\begin{abstract}
In analogy with the classical Minkowski problem, necessary and
sufficient conditions are given to assure that a given measure on the
unit sphere is the cone-volume measure of the unit ball of a finite
dimensional Banach space.
\end{abstract}

\date{\today}

\maketitle
\thispagestyle{firststyle}

\numberwithin{equation}{section}

\newtheorem{theo}{Theorem}[section]
\newtheorem{coro}[theo]{Corollary}
\newtheorem{lemma}[theo]{Lemma}
\newtheorem{prop}[theo]{Proposition}
\newtheorem{conj}[theo]{Conjecture}
\newtheorem{example}[theo]{Example}

\theoremstyle{definition}
\newtheorem*{defi}{Definition}

\section{Introduction}

The setting for this paper is $n$-dimensional Euclidean space, $\rn$.
A {\it convex body} in $\rn$ is a compact convex set that has
non-empty interior.
A {\it polytope} in $\rn$ is the convex hull of a finite set of points
in $\rn$ provided it has  positive {\it volume} (i.e., $n$-dimensional
volume).  The convex hull of a subset of these points is
called a {\it face} of the polytope if it lies entirely on the
boundary of the polytope and if it has  positive {\it area} (i.e.,
$(n-1)$-dimensional volume).

One of the cornerstones of the Brunn-Minkowski theory of convex bodies
is the {\it Minkowski problem}. It can be stated in a simple way for polytopes:
\vskip 6pt

\noindent
{\bf Discrete Minkowski problem.}
Find necessary and sufficient conditions on a set of unit vectors
$u_1,\ldots,u_m$ in $\rn$ and a set of real numbers
$a_1,\ldots, a_m>0$ that will guarantee the existence of an $m$-faced
polytope in $\rn$ whose faces have
outer unit normals  $u_1,\ldots,u_m$ and corresponding
face-areas  $a_1,\ldots, a_m$.
\vskip 6pt

More than a century ago, this problem was completely solved by Minkowski
himself \cite{Mink1897}: If the unit
vectors do not lie on a closed hemisphere of $\sn$, then a solution to
the Minkowski problem
exists if and only if
$$a_1 u_1 + \cdots + a_m u_m = 0.$$
In addition, the solution is unique up to a translation.

The discrete Minkowski problem prescribes the areas of faces of
a polytope. A natural, but still unsolved, problem involves
prescribing the cone-volumes of the polytope. If a polytope contains
the origin in its interior, then the {\it cone-volume}
associated with a face of the polytope
is the volume of the convex hull of the face and the origin.

\vskip 6pt
\noindent
{\bf Discrete logarithmic Minkowski problem.}
Find necessary and sufficient conditions on a set of unit vectors
$u_1,\ldots,u_m$ in $\rn$ and a set of real numbers $v_1, \ldots, v_m>0$
that guarantee the existence of an $m$-faced
polytope, that contains the origin in its interior, whose faces have
outer unit normals $u_1,\ldots,u_m$ and whose corresponding
cone-volumes are $v_1, \ldots, v_m$.
\vskip 6pt

To state the logarithmic Minkowski problem for general convex bodies,
we need to define the {\it cone-volume measure} of a convex body.
If $K$ is a convex body in $\rn$ that contains the origin in its
interior, then the
cone-volume measure, $V_K$, of $K$ is a Borel measure on the unit sphere
$\sn$ defined for a Borel set $\omega \subset \sn$, by
\begin{equation}
V_K(\omega) =\frac1n \int_{x\in\nu_K^{-1}(\omega)}
x\cdot \nu_K(x) \, d\mathcal H^{n-1}(x),
\end{equation}
where
$\nu_K : \partial'\negthinspace K \to \sn$  is the Gauss map of $K$, defined on
$ \partial'\negthinspace K$, the set of points of  $ \partial K$ that
have a unique outer unit normal, and $\mathcal H^{n-1}$ is
$(n-1)$-dimensional Hausdorff measure. In recent years,
cone-volume measures have appeared  in e.g.\ \cite{BGMN8},
\cite{GM18}, \cite{Lud10}, \cite{LR10}, \cite{N45}, \cite{N46},
\cite{PW12}, and \cite{Sta12}.

If $K$ is a polytope whose outer unit normals are
$u_1,\ldots,u_m$ and whose corresponding cone-volumes are $v_1,
\ldots, v_m$, then the
cone-volume measure of $K$ is the discrete measure
\[
V_K = v_1 \delta_{u_1} + \cdots + v_m \delta_{u_m},
\]
where $\delta_{u_i}$ denotes the delta measure that is concentrated on $u_i$.

\vskip 6pt
\noindent
{\bf Logarithmic Minkowski problem.}
Find necessary and sufficient conditions on a finite Borel
measure $\mu$ on the unit sphere $\sn$ so that $\mu$ is the
cone-volume measure of a convex body in $\rn$.
\vskip 6pt

The associated partial differential equation for the logarithmic
Minkowski problem is the following Monge-Ampere type equation on the
sphere: Given $f:\sn \to (0,\infty)$, solve
\begin{equation}\label{PDE}
h \det(h_{ij} + h\delta_{ij}) = f,
\end{equation}
where $h_{ij}$ is the covariant derivative of $h$ with respect to
an orthonormal frame on $S^{n-1}$ and $\delta_{ij}$ is the Kronecker delta.
Here the cone-volume measure $\mu$ is assumed to have
the density $f$, with respect to spherical Lebesgue measure. In
connection with \eqref{PDE}, we
observe a new phenomenon: the solution for measure data does not
follow from the solution for
function data, but requires a fundamentally new approach. This will be
explained after the
formulation of Theorem \ref{main-theorem}.

In \cite{Lut93jdg}, the second named author introduced
the notion of $L_p$-surface area measure and posed the associated
$L_p$-Minkowski problem which has the classical Minkowski problem
and the logarithmic Minkowski problem as two important cases.

If $K$ is a convex body in $\rn$ that contains the origin in its interior
and $p\in \mathbb R$, then the $L_p$-surface area measure $S_p(K,
\cdot)$ of $K$ is a Borel measure on the unit sphere
$\sn$ defined  on a Borel $\omega \subset \sn$, by
\begin{equation}\label{p-surface-measure}
S_p(K, \omega) = \int_{x\in\nu_K^{-1}(\omega)}
\big(x\cdot \nu_K(x)\big)^{1-p} \, d\mathcal H^{n-1}(x).
\end{equation}

When $p=1$, the measure $S_1(K,\cdot)$ is the classical surface area
measure of $K$.
When $p=0$, the measure $\frac1n S_0(K,\cdot)$ is the cone-volume
measure of $K$. When $p=2$, the measure $S_2(K,\cdot)$ is called the
quadratic surface area measure of $K$, which was studied in
\cite{LutYanZha00duke},
\cite{LutYanZha02duke}, and \cite{Lud03}. Applications of the $L_p$-surface area
measure to affine isoperimetric inequalities were given in e.g.,
\cite{CamGro02adv},
\cite{LutYanZha00jdg},
and \cite{LutYanZha05jdg}.

The following $L_p$-Minkowski problem is one of the central problems
in contemporary convex geometric analysis.

\vskip 6pt
\noindent
{\bf $L_p$-Minkowski problem.}
Find necessary and sufficient conditions on a finite Borel
measure $\mu$ on the unit sphere $\sn$ so that $\mu$ is the
$L_p$-surface area measure of a convex body in $\rn$.
\vskip 6pt

The case $p=1$ of
the $L_p$-Minkowski problem  is of course the
classical Minkowski problem, which was solved by Minkowski,
Alexandrov, and Fenchel and Jessen (see Schneider \cite{Sch93} for
references). Landmark contributions to establishing regularity
for the Minkowski problem are due to (among others) Lewy \cite{Lew38},
Nirenberg \cite{Nir53},
Cheng and Yau \cite{ChengYau}, Pogorelov \cite{Pogorelovbook},
and Caffarelli \cite{Caf90Ann}.

For $p>1$,
a solution to the $L_p$-Minkowski problem was given
in \cite{Lut93jdg} under the assumption
that $p\neq n$ and that measure $\mu$ is {\it even} (assumes the same
values on antipodal Borel subsets of $S^{n-1}$) . In
\cite{LutYanZha04tams}, it was shown that, for
$p\neq n$, the $L_p$-Minkowski problem has an
equivalent volume-normalized formulation, and (using a slightly
modified approach) a solution of the {\it even
volume-normalized} $L_p$-Minkowski problem was given for all $p>1$.
The regular even $L_p$-Minkowski problem was studied in
\cite{LutOli95}. Extensions of the $L_p$-Minkowski problem are
studied in \cite{HLYZ10}.

In the plane ($n=2$), the $L_p$-Minkowski problem was treated by
Stancu \cite{Sta02, Sta03},  Umanskiy \cite{Uma03}, Chen
\cite{Chen06adv}, and by Jiang \cite{Jiang}.
Solutions to the $L_p$-Minkowski problem
are the homothetic solutions of Gauss curvature flows
(see e.g., \cite{And99inv}, \cite{And03jams},
\cite{Chou85Com}, \cite{Chow85jdg}, \cite{GagHam86jdg}).
When the measure $\mu$ is proportional to Lebesgue measure
on the unit circle, $S^1$, solutions to the $L_p$-Minkowski problem in
$\mathbb R^2$
are the homothetic solutions of isotropic curve flows classified
by Andrews \cite{And03jams}.

The $L_p$-Minkowski problem (without the assumption that the data is
even) was treated by Guan and Lin \cite{GuaLin04} and by Chou
and Wang \cite{ChoWan06}. Hug et al.\  \cite{HLYZ05dcg} gave an
alternate approach to some of the results of Chou and Wang
\cite{ChoWan06}.

The solution of the even $L_p$-Minkowski problem was a critical
ingredient that allowed the authors of \cite{LutYanZha02jdg} to
extend the affine Sobolev inequality \cite{Zhang99} and obtain
the $L_p$ affine Sobolev inequality, and later enabled Cianchi et al.\
\cite{CLYZ} to establish the affine Moser-Trudinger and the affine
Morrey-Sobolev inequalities. These were then strengthened by Haberl et
al in \cite{HS09jdg,HS09jfa,HSX11ma}. Connections of the $L_p$-Minkowski
problem with optimal Sobolev norms was shown in \cite{LutYanZha06}.

There are other important works on extensions and analogues of the
Minkowski problem,
see e.g., \cite{ColFim10},  \cite{GuaGua02Ann}, \cite{GuaMa03Inv},
\cite{HuMaShe04}, and \cite{Jer96acta}.

Much of the past work on the $L_p$-Minkowski problem is limited to
the case $p>1$. One of the reasons is that the uniqueness of
the $L_p$-Minkowski problem for $p>1$ can be shown by using
mixed volume inequalities (see \cite{Lut93jdg}).
When $p<1$, the problem becomes challenging because there are no mixed
volume inequalities available as of yet.
The case $p=0$, called the logarithmic case, is probably the
most important case with geometric significance because it is
the singular case. The cone-volume measure is the only one
among all the $L_p$-surface area measures  \eqref{p-surface-measure}
that is $\SL(n)$ invariant; i.e., for $\phi \in \SL(n)$,
\[
S_0(\phi^t K, \omega) = S_0(K, \langle{\phi \omega}\rangle),
\]
where $\langle{\phi\omega}\rangle =\{ \phi u / |\phi u| : u\in \omega\}$.
In light of the equivalence of the study of finite dimensional Banach spaces
and that of origin-symmetric convex bodies and the fact that the
cone-volume measure
of an origin-symmetric convex body is even, the following even
logarithmic Minkowski problem is of great interest.

\vskip 6pt
\noindent
{\bf Even logarithmic Minkowski problem.}
Find necessary and sufficient conditions on an
even Borel measure $\mu$ on the unit sphere $\sn$ so that
$\mu$ is the cone-volume measure of an origin-symmetric convex
body in $\rn$.
\vskip 6pt

It is the aim of this paper to solve the existence part of the even
logarithmic Minkowski problem.

\begin{defi}
A finite Borel measure $\mu$ on $S^{n-1}$ is said to satisfy the
{\it subspace concentration inequality} if, for every subspace $\xi$ of
$\R^n$, such that $0<\dimension \xi<n$,
\begin{equation}\label{properdef0}
 \mu(\xi\cap S^{n-1})\leq \frac 1n \, \mu(S^{n-1})\dimension \xi.
\end{equation}
The measure is said to satisfy the {\it subspace concentration
condition}  if in addition to satisfying
the subspace concentration inequality (\ref{properdef0}), whenever
\begin{equation*}
 \mu(\xi\cap S^{n-1}) = \frac 1n \, \mu(S^{n-1})\dimension \xi,
\end{equation*}
for some subspace $\xi$, then
there exists a subspace
$\xi'$, that is  complementary to $\xi$ in $\R^n$,  so that also
\begin{equation*}
 \mu(\xi'\cap S^{n-1})= \frac 1n \, \mu(S^{n-1})\dimension \xi',
\end{equation*}
or equivalently so that $\mu$ is concentrated
on $S^{n-1}\cap(\xi\cup \xi')$.

The measure $\mu$ on $S^{n-1}$ is said to satisfy the {\it
strict subspace concentration inequality}
if the inequality in (\ref{properdef0}) is strict for each subspace $\xi
\subset \rn$, such that $0<\dimension \xi<n$.
\end{defi}

The following theorem is the main result of this paper.
It gives the necessary and sufficient conditions for the existence
of a solution to the even logarithmic Minkowski problem.

\begin{theo}\label{main-theorem}
A non-zero  finite even Borel measure on the unit sphere $\sn$ is
the cone-volume measure of an origin-symmetric convex body
in $\rn$ if and only if it satisfies the subspace concentration
condition.
\end{theo}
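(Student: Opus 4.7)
I will split the proof into necessity and sufficiency. Necessity is a geometric inequality on cone-volume measures, while sufficiency is an existence result proved by a variational method in which the subspace concentration condition provides the essential compactness.

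For necessity, reduce by weak continuity of $V$ to the polytope case. For an origin-symmetric polytope $P$ and a subspace $\xi$ of dimension $k$, the facets of $P$ with outer normal in $\xi$ are parallel to $\xi^\perp$, so the union of the cones from the origin over these facets is a cylindrical subset of $P$ whose volume one directly bounds by $(k/n)|P|$ using origin-symmetry. Equality holds iff $P$ splits as the direct sum $P=(P\cap\xi)\oplus(P\cap\xi^\perp)$. Passing to the limit and applying the equality clause to both $\xi$ and $\xi^\perp$ yields the subspace concentration condition in full.

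For sufficiency, consider the scale-invariant functional
\[ \Phi(K)=\int_{\sn}\log h_K\,d\mu-\frac{|\mu|}{n}\log|K| \]
on origin-symmetric convex bodies $K\subset\rn$, and minimize the restriction $F(K)=\int\log h_K\,d\mu$ subject to $|K|=1$. The Euler--Lagrange identity at a minimizer reads $d\mu=(|\mu|/|K_0|)h_{K_0}\,dS_{n-1}(K_0,\cdot)$, so that $\mu$ is a positive multiple of $V_{K_0}$, and a rescaling of $K_0$ yields a body $K$ with $V_K=\mu$. The problem therefore reduces to showing that the infimum is attained.

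The main obstacle is compactness of a minimizing sequence, and this is the sole place where the subspace concentration condition enters. I would first treat a discrete even $\mu$ satisfying the strict SCI. For a minimizing sequence $\{K_j\}$ with $|K_j|=1$, choose a principal-axes frame $(e_1^{(j)},\dots,e_n^{(j)})$ of the John ellipsoid of $K_j$ with ordered principal axes $a_1^{(j)}\le\cdots\le a_n^{(j)}$, and set $\xi_k^{(j)}=\mathrm{span}(e_1^{(j)},\dots,e_k^{(j)})$. Standard ellipsoidal bounds on $h_{K_j}$, combined with an Abel summation over $k$, produce an estimate of the form
\[ F(K_j)\;\ge\;C+\sum_{k=1}^{n-1}\Bigl(\tfrac{k}{n}|\mu|-\mu(\xi_k^{(j)}\cap\sn)\Bigr)\log\!\bigl(a_{k+1}^{(j)}/a_k^{(j)}\bigr), \]
in which every summand is nonnegative by the subspace concentration inequality. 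Strict SCI forces any degeneration (some ratio $a_{k+1}^{(j)}/a_k^{(j)}\to\infty$) to push $F(K_j)\to+\infty$, contradicting minimality; the $K_j$ thus remain in a compact family of non-degenerate bodies, and Blaschke selection furnishes a minimizer. For a general $\mu$ satisfying SCC, approximate by discrete even measures with strict SCI, solve each approximating problem, and pass to the limit using the same estimate applied to the limit measure; the equality clause of SCC is handled by induction on $n$ after splitting the problem along a complementary subspace pair.
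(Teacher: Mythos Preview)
Your sufficiency outline is essentially the paper's: the Euler--Lagrange identity is Lemma~\ref{minimization-existence}, the John-ellipsoid plus Abel-summation compactness argument for strict SCI is Lemma~\ref{crosslimit} and Theorem~\ref{logmin0strict}, and the induction on dimension for the equality clause is Theorem~\ref{sufficiency} via Lemmas~\ref{restriction} and~\ref{last}. Two points need repair. First, the displayed estimate with $\mu(\xi_k^{(j)}\cap\sn)$ is not literally valid: for $u$ close to but not on $\xi_k^{(j)}$ the support function $h_{K_j}(u)$ is still only comparable to $a_k^{(j)}$, so the mass that must be paired with $\log a_k^{(j)}$ is not $\mu(\xi_k^{(j)}\cap\sn)$ but the mass of an $\eta$-neighborhood. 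The paper handles this by extracting a subsequence so that the frames converge, partitioning $\sn$ into the sets $A_{i,\eta}$, and letting $\eta\to0$ to recover $\mu(\xi_k\cap\sn)$ for the \emph{limit} frame; without this you cannot conclude that a strict gap persists along the sequence. Second, the detour through discrete measures followed by approximation is both unnecessary (the compactness argument works verbatim for any $\mu$ with strict SCI) and not a valid route to general SCC: as the introduction stresses, for $p=0$ the measure problem does \emph{not} follow from discrete or function data by approximation, precisely because the SCI gaps may close in the limit and the approximating bodies may degenerate. The equality case must be handled by the inductive splitting alone.

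Your necessity argument has a real gap. Weak convergence $V_{P_j}\to V_K$ gives, for the closed set $F=\xi\cap\sn$, only $\limsup_j V_{P_j}(F)\le V_K(F)$, the wrong direction for transferring the bound $V_{P_j}(F)\le\tfrac{k}{n}V(P_j)$ to $K$; approximating the unit square by regular $2m$-gons already shows $V_{P_j}(F)\to0$ while $V_K(F)>0$. The paper therefore does \emph{not} reduce to polytopes but proves the inequality directly for every origin-symmetric body via Schwarz symmetrization $\tilde K=\ess_\xi K$: one checks $V_{\tilde K}(\xi\cap\sn)=V_K(\xi\cap\sn)$ and $V(\tilde K)=V(K)$, and for the rotationally symmetric $\tilde K$ the inequality drops out of a Fubini computation. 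Your equality clause is also wrong: equality does \emph{not} force $P=(P\cap\xi)\oplus(P\cap\xi^\perp)$. Take the sheared square with vertices $(2,1),(0,1),(-2,-1),(0,-1)$ and $\xi$ the $y$-axis; equality holds, but the complementary subspace $\xi'$ on which the remaining cone-volume sits is $\mathrm{span}(1,-1)$, not $\xi^\perp$. The correct conclusion, obtained from the Brunn--Minkowski equality case after symmetrization, is that $K=(K\cap\xi^\perp)+C$ for some $m$-dimensional compact convex $C$, with $\xi'$ the orthogonal complement of the linear hull of $C$.
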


We are dealing with the general case where the data is a measure
(which of course includes as a special case the {\it discrete
log-Minkowski problem} where the measure is concentrated on a finite
set of points). This is much harder than the case where the given data
$\mu$ in (\ref{PDE}) is a function. It is remarkable that the subspace
concentration condition, which is satisfied by all cone-volume
measures of convex bodies, is also the critical and only condition
that is needed for existence. For functions, the subspace concentration
condition is trivially satisfied but for measures it is precisely
what is necessary.

We need to observe a crucial difference between the log-Minkowski
problem and the classical
Minkowski problem, in fact, all the cases of the $L_p$-Minkowski problem
where $p\geq1$.  Once the $L_p$-Minkowski problem has been solved (for
any particular value of $ p \geq 1$) for the case where the data
consists of functions, the general  $L_p$-Minkowski problem (where the
data is given by measures) can be solved by an approximation argument.
However, the solution to the log-Minkowski problem for the general
case (i.e., for measures) does not follow from its solution for the
cases where the data is given by functions --- at least not by any
approximation argument known to the authors.

The discrete planar case of Theorem~\ref{main-theorem}
was proved by Stancu \cite{Sta02}.

Uniqueness for the logarithmic Minkowski problem is not treated
in this paper. In determining the ultimate shape of his worn stone,
Firey \cite{Fir74} showed that if the cone-volume
measure of a smooth origin-symmetric convex body in $\rn$ is a
constant multiple of the
Lebesgue measure on $\sn$, then the convex body must be a ball.
Firey conjectured that his symmetry assumption was necessitated only
by his methods.
In $\mathbb R^3$, Firey's conjecture was established by
Andrews \cite{And99inv}.

Firey's theorem regarding unique determination by cone-volume measures can be
shown to hold for measures other than Lebesgue measures. This will be
treated in a separate paper.

\section{Preliminaries}

We develop some notation and, for quick later reference,  list some basic facts
about convex bodies. Good general references for the theory of convex
bodies are provided by the books of Gardner \cite{Gar95}, Gruber
\cite{Gruberbook}, Schneider \cite{Sch93}, and Thompson
\cite{Tho96}.

The standard inner product of the vectors $x,y\in\R^n$ is denoted by
$x\cdot y$.  We write $|x|^2=x\cdot x$, and  $\sn=\{x\in\R^n:\,\,
|x|=1\}$ for the
boundary of the Euclidean unit ball $B$ in $\R^n$.  The volume of $B$
will be denoted by $\omega_n$.

The set of continuous functions on the sphere $\sn$ will be
denoted by $C(\sn)$ and will always be viewed as equipped with the
max-norm metric:
\[ |f-g|_{\infty}=\max_{u\in\sn}|f(u)-g(u)|, \]
for $f,g\in C(\sn)$.
The set of strictly positive continuous functions will be
denoted by $C^+(\sn)$, and $C^+_e(\sn)$ will denote the subset of
$C^+(\sn)$ consisting of only
the even functions.

Write $V_i$ for the $i$-dimensional Lebesgue measure. When $i=n$,
the subscript will be suppressed, and $V_n$ will be simply written as $V$.  For
$k$-dimensional Hausdorff measure, we write $\cH^k$. The letter $\mu$
will be used
exclusively to denote a finite Borel measure on $\sn$. For such a
measure $\mu$, we denote by $|\mu|$ its total mass, i.e.
$|\mu|=\mu(\sn)$.

We write $o$ for the origin of $\R^n$,
and $[x_1,\ldots,x_i]$ to denote the convex hull
of the points $x_1,\ldots,x_i \in \R^n$.
For a non-zero $u \in \R^n$ or a linear subspace $\xi$, let $u^\bot$
and $\xi^\bot$, denote
the orthogonal complement of the respective linear subspace. Moreover, write
$\piper_\xi:\R^n\to \xi$ for the orthogonal projection onto $\xi$.

A {\it convex body} is a compact convex subset of $\R^n$
with non-empty interior. The set of
convex bodies in $\R^n$ containing the origin in their interiors is
denoted by $\cK^n_o$. The set of convex bodies in $\R^n$ that are
symmetric about the origin will be denoted by $\cK^n_e$.
If  $\xi \subset \R^n$ is an affine subspace, and $K$ is a convex body
in $\xi$, then the set of relative interior points of $K$
with respect to $\xi$ is denoted by $\relinterior K$.

The  {\it support function} $h_K:\R^n\to \R$
of a compact, convex $K\subset \R^n$ is defined, for $x\in\mathbb{R}^n$, by
\[
 h_K(x)=\max\{x\cdot y:\,\,y\in K\}.
\]
Note that support functions are positively homogeneous of degree one
and subadditive. From the definition, it follows immediately that, for
$\phi \in \GL(n)$, the support function of $\phi K =\{\phi x:\,\, x\in
K\}$ is given by
\[
h_{\phi K}(x) = h_K(\phi^t x),
\]
for $x\in\rn$.
The support function of a body $K\in\cK^n_o$ is
strictly positive and continuous on the unit sphere $\sn$.
If $x\in\partial'\negthinspace K$, then the {\it supporting distance}
of $\partial K$ at $x$ is defined to be
$x\cdot\nu_K(x)=h_K(\nu_K(x))$ and will be denoted by $d_K(x)$.

The set $\cK^n_o$ will be viewed as equipped with the Hausdorff
metric and thus for a sequence $\{K_i\}$ of bodies in $\cK^n_o$
and a body $K\in \cK^n_o$, we have
$\lim_{i\to\infty}K_i=K$ provided that
\[
| h_{K_i}-h_{K} |_{\infty} \to0.
\]

A boundary point $x\in\partial K$ is said to have $u\in\sn$ as an
outer normal provided $x\cdot u =h_K(u)$. A boundary point is
said to be {\it singular} if it has more than one unit normal
vector. It is well known (see, e.g., \cite{Sch93}) that the set of
singular boundary points of a convex body has
$\cH^{n-1}$-measure equal to
$0$.

For each Borel set $\omega\subset\sn$, the
{\it inverse spherical image} of $\omega$
is the set of all points of $\partial K$ which have an outer unit
normal belonging to the set $\omega$. Since the inverse spherical
image of $\omega$ differs from $\nu_K^{-1}(\omega)$  by a set of
$\cH^{n-1}$-measure equal to $0$, we will often make no distinction
between the two sets.

Associated with each convex
body $K \in \cK^n_o$ is a Borel measure $S_K$ on $S^{n-1}$ called
the {\it Aleksandrov-Fenchel-Jessen surface area measure} or the {\it
surface area measure} of
$K$, defined for each Borel set $\omega \subset S^{n-1}$ as the
$\cH^{n-1}$-measure of the inverse spherical image of $\omega$, or
equivalently
\begin{equation}\label{defsurfaceareameasure}
S_K(\omega)=\cH^{n-1}(\nu_K^{-1}(\omega)).
\end{equation}
From \eqref{p-surface-measure}
and \eqref{defsurfaceareameasure}, we see that $S_1(K,\cdot) = S_K$.
We shall require the basic volume formula
\begin{equation}\label{volrep}
V(K)=\frac1n\int_{u\in\sn}h_K(u) \,dS_K(u).
\end{equation}
As is well known, and easily shown, the measure $S_p(K,\cdot)$ is
absolutely continuous with respect to the measure $S_K$, and its
Radon-Nikodym derivative is $h_K^{1-p} $; i.e.,
\begin{equation}
dS_p(K,\cdot) = h_K^{1-p} dS_K.
\end{equation}
Obviously, the measure $S_p(K,\cdot)$ is homogeneous of degree $n-p$
with respect to dilation of $K$, that is,
$S_p(\lambda K, \cdot)=\lambda^{n-p} S_p(K,\cdot)$, for $\lambda >0$.

We will make use of the weak continuity of surface
area measures; i.e., if $\{K_i\}$ is a sequence of bodies in
$\cK^n_o$ then
\begin{equation}\label{weakconv}
\lim_{i\to\infty} K_i=K\in\cK^n_o\quad\Longrightarrow\quad
\lim_{i\to\infty} S_{K_i}=S_K,\,\,\textnormal{weakly}.
\end{equation}

If $M$ is a convex body in $\rn$ and $x_0\in M$, then
 $\rho_{-x_0+M}: \rn \setminus \{0\}\to [0,\infty)$, the {\it radial
function} of $M$ with respect to $x_0$, is defined for $x\in \rn
\setminus \{0\}$ by
\[
\rho_{-x_0+M}(x)=\max\{\lambda\ge0 : x_0 + \lambda x\in M\}.
\]
Note that for $u\in\sn$,  the distance from $x_0$ to  $\partial M$ in
direction $u$ is precisely $\rho_{-x_0+M}(u)$, that is, $x_0 +
\rho_{-x_0+M}(u)u \in \partial M$.

\section{The variational method}

A function $h\in C^+(\sn)$ defines a family $\{H_u\}_{u\in\sn}$ of
hyperplanes
\[H_u=\{x\in\R^n:\,\,x\cdot u=h(u)\}.\]

Consider the intersection of the halfspaces that
are associated to $h$ and bounded by the family $\{H_u\}_{u\in\sn}$. This gives
rise to the convex body
\[
K=\  \bigcap_{u\in\sn}\{x\in\R^n:\,\,x\cdot u\leq h(u)\}.
\]
The body $K$ is called the {\it Aleksandrov body} (also known as the
{\it Wulff shape})
associated with $h$. Note that since $h$ is both strictly positive and
continuous its Aleksandrov body, $K$, must be an element of
$\cK^n_o$. The Aleksandrov body associated with $h$ can
alternatively be defined as the unique maximal element, with respect
to set inclusion, of the set
\[\{Q\in\cK^n_o:\,\, h_Q\leq h\}.\]
Obviously, for the Aleksandrov body $K$ associated with the function
$h$, we have
$$
h_K \le h,
$$
and it turns out that, in fact,
\begin{equation}\label{wulfft}
h_{K}= h,\qquad\textnormal{a.e. with respect to} \,\,S_{K}.
\end{equation}

If $h$ is the support function of a
convex body $K\in\cK^n_o$, then $K$ itself is the Aleksandrov body
associated with $h$. If $h$ is an even
function, then the Aleksandrov body associated with $h$ is
origin-symmetric. We will need  {\it Aleksandrov's
convergence lemma} (see, e.g., \cite[Lemma 6.5.2]{Sch93}): If the
functions $h_i\in C^+(\sn)$ have
associated Aleksandrov bodies $K_i\in\cK^n_o$, then
\[h_i \to h \in C^+(\sn) \qquad
\Longrightarrow\qquad K_i \to K,\] where $K$ is the Aleksandrov body
associated with $h$.

The volume $V(h)$ of a function $h\in C^+(\sn)$ is defined as the
volume of the Aleksandrov body associated with $h$. Since the
Aleksandrov body associated with the support function $h_K$ of a
convex body $K\in\cK^n_o$ is the body $K$ itself, we have
\begin{equation}\label{volext}
V(h_K)=V(K).
\end{equation}
Obviously,  the functional $V:C^+(\sn)\to (0,\infty)$ is homogeneous
of degree $n$; i.e., for $f\in C^+(\sn)$ and real $s>0$,
\begin{equation}\label{homog1}
V(sf) = s^n V(f).
\end{equation}
From Aleksandrov's convergence lemma and the continuity of ordinary volume on
$\cK^n_o$ we see that
\begin{equation}\label{cont1}
V:C^+(\sn)\to (0,\infty)\quad\text{is continuous}.
\end{equation}

Let $I\subset\R$ be an interval containing 0 and suppose
$h_t(u)=h(t,u):I\times \sn\to(0,\infty)$ is continuous. For fixed
$t\in I$, let
\[
K_t=\ \bigcap_{u\in\sn} \{ x \in\R^n:\,\, x\cdot u\leq h_t(u)\}
\]
be the Aleksandrov body associated with $h_t$. The family of convex bodies
$K_t$ will be called {\it the family of Aleksandrov
bodies associated with $h_t$}. Obviously, we can rewrite \eqref{wulfft} as
\begin{equation}\label{wulff1}
h_{K_t}\leq h_t\qquad\text{and}\qquad h_{K_t}= h_t,\
\textnormal{a.e. with respect to} \,\,S_{K_t},
\end{equation}
for each $t\in I$.

The following form (proved in e.g., \cite{HLYZ10}) of Aleksandrov's
Lemma (see e.g.,  \cite{Sch93})  will be needed.

\begin{lemma} \label{var-formula-wulff}
Suppose $I\subset\R$ is an open interval containing $0$, and that the
function $h_t=h(t,u):I\times
\sn\to(0,\infty)$ is continuous. If, as $t\to 0$, the convergence in
\[
\frac{h_t -h_0}{t}\ \to\ f=\left.\frac{\partial h_t}{\partial t}
\right\vert_{t=0},\
\]
is uniform on $\sn$, and if $K_t$ denotes the
Aleksandrov body associated with $h_t$, then
\[\lim_{t\to 0}\frac{V(K_t)-V(K_0)}{t}=\int_{\sn}f\,dS_{K_0}.\]
\end{lemma}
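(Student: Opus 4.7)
The plan is to apply the classical Minkowski first inequality together with the volume representation $V(K) = \frac{1}{n}\int h_K\,dS_K$ to squeeze the difference quotient $(V(K_t)-V(K_0))/t$ between two quantities that both converge to $\int f\,dS_{K_0}$. The three background facts I will use freely are: (i) Aleksandrov's convergence lemma, which gives $K_t \to K_0$ in $\cK^n_o$; (ii) the weak continuity \eqref{weakconv}, giving $S_{K_t} \to S_{K_0}$ weakly (with total masses uniformly bounded); and (iii) the Minkowski first inequality $\int h_L\,dS_K \ge n\,V(K)^{(n-1)/n} V(L)^{1/n}$ for $K,L\in\cK^n_o$.

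First I would record the two "Wulff" identities that follow from \eqref{wulff1} and \eqref{volrep}:
\[
nV(K_t) = \int_{\sn} h_{K_t}\,dS_{K_t} = \int_{\sn} h_t\,dS_{K_t},
\qquad
nV(K_0)=\int_{\sn} h_0\,dS_{K_0}.
\]
Applying Minkowski's inequality to the pair $(K_0,K_t)$ and using $h_{K_t}\le h_t$ gives
\[
\int_{\sn} h_t\,dS_{K_0}\ \ge\ \int_{\sn} h_{K_t}\,dS_{K_0}\ \ge\ n\,V(K_0)^{(n-1)/n}\,V(K_t)^{1/n}.
\]
Rewriting $V(K_0)^{1/n}$ using the identity above, I would obtain
\[
V(K_t)^{1/n}-V(K_0)^{1/n}\ \le\ \frac{1}{n\,V(K_0)^{(n-1)/n}}\int_{\sn}(h_t-h_0)\,dS_{K_0}.
\]
Interchanging the roles of $0$ and $t$ (and using the corresponding Wulff identity for $K_t$) yields the reverse sandwich bound
\[
V(K_0)^{1/n}-V(K_t)^{1/n}\ \le\ \frac{1}{n\,V(K_t)^{(n-1)/n}}\int_{\sn}(h_0-h_t)\,dS_{K_t}.
\]

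Next, assuming $t>0$ I divide both inequalities by $t$ to get
\[
\frac{1}{n\,V(K_t)^{(n-1)/n}}\int_{\sn}\frac{h_t-h_0}{t}\,dS_{K_t}\ \le\ \frac{V(K_t)^{1/n}-V(K_0)^{1/n}}{t}\ \le\ \frac{1}{n\,V(K_0)^{(n-1)/n}}\int_{\sn}\frac{h_t-h_0}{t}\,dS_{K_0}.
\]
Since $(h_t-h_0)/t\to f$ uniformly on $\sn$, $S_{K_t}\to S_{K_0}$ weakly with uniformly bounded total mass, and $V(K_t)\to V(K_0)$ (continuity of volume plus Aleksandrov's convergence lemma), both the upper and lower bounds converge to $\frac{1}{n\,V(K_0)^{(n-1)/n}}\int_{\sn} f\,dS_{K_0}$. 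The case $t<0$ is identical, with the inequalities flipped by the negative divisor. Therefore the two-sided derivative of $t\mapsto V(K_t)^{1/n}$ at $t=0$ exists and equals $\frac{1}{n\,V(K_0)^{(n-1)/n}}\int f\,dS_{K_0}$; multiplying by $n\,V(K_0)^{(n-1)/n}$ (via the chain rule for $v\mapsto v^n$) gives exactly $\int_{\sn} f\,dS_{K_0}$, which is the claim.

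The only subtle step is the passage to the limit in $\int\frac{h_t-h_0}{t}\,dS_{K_t}$ against a varying measure. I would handle this by splitting
\[
\int_{\sn}\frac{h_t-h_0}{t}\,dS_{K_t}=\int_{\sn} f\,dS_{K_t}+\int_{\sn}\!\Big(\frac{h_t-h_0}{t}-f\Big)dS_{K_t},
\]
bounding the second term by $\bigl\|\tfrac{h_t-h_0}{t}-f\bigr\|_\infty\,S_{K_t}(\sn)$, which tends to $0$ by uniform convergence and boundedness of $S_{K_t}(\sn)$, while the first term tends to $\int f\,dS_{K_0}$ by weak convergence of $S_{K_t}$ and continuity of $f$. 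This is the main technical point; everything else is classical machinery applied in the right order.
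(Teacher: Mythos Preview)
Your argument is correct and complete: the Minkowski first inequality gives the two-sided sandwich on $\bigl(V(K_t)^{1/n}-V(K_0)^{1/n}\bigr)/t$, Aleksandrov's convergence lemma together with weak convergence of surface area measures handles the limit of the lower bound (your splitting argument is exactly what is needed), and the chain rule converts the derivative of $V^{1/n}$ into the derivative of $V$.

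The paper does not actually give its own proof of this lemma; it simply states the result and cites \cite{HLYZ10} and Schneider \cite{Sch93}. Your argument is precisely the classical proof one finds in those references (Schneider's treatment proceeds via the mixed-volume inequality in the same way), so there is no substantive difference to compare.
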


\section{A minimization problem}

Let $\mu$ be a finite even Borel measure on $\sn$
with total mass $|\mu|>0$.
Define the logarithmic functional $\Phi_\mu : \mathcal K^n_e \to \R$,
\begin{equation}
\Phi_\mu(K) = \int_{\sn} \log h_K \, d\mu.
\end{equation}

Consider the minimization problem,
\begin{equation}\label{min-problem}
\inf\{ \Phi_\mu(Q) :  \text{$V(Q)=|\mu|$ and $Q \in \mathcal K^n_e$} \}.
\end{equation}

\begin{lemma}\label{minimization-existence}
Let $\mu$ be a finite even Borel measure on $\sn$
with $|\mu|>0$. If $K_0\in  \mathcal K^n_e$ is an origin-symmetric
convex body such that
$V(K_0)=|\mu|$ and
$$
 \Phi_\mu(K_0) =\inf\{ \Phi_\mu(Q) :  \text{$V(Q)=|\mu|$ and $Q \in
\mathcal K^n_e$} \},
$$
then the measure $\mu$ is the cone-volume measure of $K_0$.
\end{lemma}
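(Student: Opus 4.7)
The plan is to carry out a Lagrange-multiplier-type variational argument, using a logarithmic family of perturbations of $h_{K_0}$ and comparing the log-functional against a surrogate that is easy to differentiate. Given an even $f\in C(\sn)$, define for $t$ in a small open interval $I$ around $0$
\[
h_t(u) = h_{K_0}(u)\,e^{t f(u)},
\]
so that $h_t\in C^+_e(\sn)$, and let $K_t$ be the Aleksandrov body associated with $h_t$. Since $h_t$ is even, $K_t\in\mathcal K^n_e$, and since $h_0=h_{K_0}$ we have $K_0$ is the Aleksandrov body at $t=0$. To meet the volume constraint, I normalize to $\bar K_t := (|\mu|/V(K_t))^{1/n}\,K_t\in\mathcal K^n_e$, which satisfies $V(\bar K_t)=|\mu|$ and $\bar K_0=K_0$.

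The key observation is that we do not have a usable formula for $h_{K_t}$ in terms of $t$, only the inequality $h_{K_t}\le h_t$ from the Aleksandrov construction, with equality at $t=0$. So I introduce the auxiliary function
\[
\phi(t) = \int_{\sn} \log h_t\,d\mu + \frac{|\mu|}{n}\log\!\frac{|\mu|}{V(K_t)}.
\]
Expanding the support function of $\bar K_t$ and using $h_{K_t}\le h_t$ gives
\[
\Phi_\mu(\bar K_t) = \int_{\sn}\log h_{K_t}\,d\mu + \frac{|\mu|}{n}\log\!\frac{|\mu|}{V(K_t)} \ \le\ \phi(t),
\]
while at $t=0$ equality holds, so $\phi(0)=\Phi_\mu(K_0)$. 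The minimality hypothesis yields $\Phi_\mu(\bar K_t)\ge\Phi_\mu(K_0)$, and chaining the inequalities gives $\phi(t)\ge\phi(0)$ for all $t\in I$. Hence $\phi$ attains its minimum at $t=0$.

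Now $\phi$ is genuinely differentiable at $0$: the first term is $\int \log h_{K_0}\,d\mu + t\int f\,d\mu$, and since $(h_t-h_0)/t \to h_{K_0}f$ uniformly on $\sn$, Lemma \ref{var-formula-wulff} gives $V'(K_0)=\int h_{K_0}f\,dS_{K_0}$. Using $V(K_0)=|\mu|$, I compute
\[
\phi'(0) = \int_{\sn} f\,d\mu - \frac{1}{n}\int_{\sn} h_{K_0}\,f\,dS_{K_0}.
\]
Because $\phi$ has an interior minimum at $0$, $\phi'(0)=0$, and by the definition of the cone-volume measure this says $\int f\,d\mu = \int f\,dV_{K_0}$ for every even $f\in C(\sn)$.

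Finally, the extension to all $f\in C(\sn)$ is automatic: both $\mu$ (by hypothesis) and $V_{K_0}$ (since $K_0\in\mathcal K^n_e$) are even, so each assigns the same integral to $f$ as to its even part $\tfrac12(f(u)+f(-u))$, and the two measures therefore agree on $C(\sn)$, forcing $\mu=V_{K_0}$. The main obstacle is precisely that $t\mapsto \Phi_\mu(\bar K_t)$ is not directly differentiable because the Aleksandrov body depends on $t$ only implicitly; the inequality-sandwich with $\phi$ is what lets us bypass this and read off the Euler--Lagrange identity cleanly.
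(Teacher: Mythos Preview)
Your proof is correct and follows essentially the same approach as the paper: your auxiliary $\phi(t)$ is, up to an additive constant, precisely $\log M_0(h_t)$ for the paper's functional $M_0(q)=V(q)^{-1/n}\exp\!\int\log q\,d\mu$, and your sandwich $\phi(t)\ge\Phi_\mu(\bar K_t)\ge\Phi_\mu(K_0)=\phi(0)$ is a repackaging of the paper's observation that the Aleksandrov body reduction forces $h_{K_0}$ to be a global minimizer of $M_0$ on $C_e^+(\sn)$. The differentiation step via Lemma~\ref{var-formula-wulff} and the conclusion are then identical.
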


\begin{proof}
It is easily seen that it is sufficient to establish the Lemma under
the assumption that $\mu$ is a probability measure.

Define the functional
$M_0:C_e^+(\sn)\to (0,\infty)$ by
\[
M_0(q) = \frac1{ V(q)^{1/n} }    \exp\left(\int_{S^{n-1}}
\log q\, d\mu\right),
\]
for $q\in C_e^+(\sn)$.
Since the functional $V:C_e^+(\sn)\to (0,\infty)$
is continuous, we see that the functional $M_0:C_e^+(\sn)\to
(0,\infty)$ is continuous as well.
From (\ref{homog1}) we see that the functional $M_0:C_e^+(\sn)\to
(0,\infty)$ is homogeneous of degree $0$; i.e.,
 for $q\in C_e^+(\sn)$ and real $s>0$, we have $M_0(sq) = M_0(q)$.

Consider the minimization problem,
\begin{equation}\label{min-problem-function}
\inf\{M_0(q) : q\in C_e^+(\sn)\}.
\end{equation}
Suppose $f\in C_e^+(S^{n-1})$. Let $K$ be the Aleksandrov
body associated with $f$. Then $V(f)=V(h_K)=V(K)$ but $h_K\leq f$. Therefore,
$M_0(h_K) \leq  M_0(f)$.  We can therefore limit our search for the infimum
of $M_0$ by restricting our attention to support functions of
origin-symmetric convex
bodies. Since $M_0$ is homogeneous of degree $0$, the
infimum of $M_0$ is
\[
\inf \{ M_0(q) : q\in C_e^+(S^{n-1}) \} =
\inf \{ e^ { \Phi_\mu(Q)}:  \text{$V(Q)=1$
and $Q \in \mathcal K^n_e$} \}.
\]
The hypothesis of our Lemma is that the right infimum is in fact a
minimum and that it is attained at
$K_0\in \mathcal K^n_e$. Therefore, the support function $h_{K_0}>0$ is
a solution
of the minimization problem (\ref{min-problem-function}); i.e.,
\begin{equation}\label{proof1}
\inf \{ M_0(q) : q\in C_e^+(S^{n-1}) \} =M_0(h_{K_0}).
\end{equation}

Suppose $g\in C_e(S^{n-1})$ is arbitrary but fixed.
Consider the family $h_t\in C_e^+(S^{n-1})$, where the function
$h_t=h(t,\cdot):\R\times\sn \to (0,\infty)$ is defined by
\[
h_t=h(t,\cdot) = h_{K_0} e^{ t g},
\]
and let $K_t$ denote the Aleksandrov body associated with $h_t$
(since $h_0$ is the support function of the convex body $K_0$ our
notation is consistent).

Since $g$ is bounded on $\sn$, for $h_t = h_{K_0} e^{ t g}$, the
hypothesis of Lemma \ref{var-formula-wulff}
\[
\frac{h_t -h_0}{t}\ \to\ g h_{K_0},\qquad\text{uniformly on $\sn$,}
\]
as $t\to 0$, is satisfied and we get from Lemma \ref{var-formula-wulff}
\begin{equation}\label{proof3}
\left.\frac{d}{dt} V(K_t)\right\vert_{t=0}\  =\  \int_{\sn} g h_{K_0}\,dS_{K_0}.
\end{equation}
Now (\ref{proof3})  shows that the function $t\mapsto M_0(h_t)$, where
\begin{equation}\label{proof4}
M_0(h_t) =  V(K_t)^{-1/n}     \exp\left(\int_{S^{n-1}}
\log(h_{K_0} e^{ t g})\, d\mu\right),
\end{equation}
is differentiable at  $t=0$, and, after recalling that  by hypothesis
$V(K_0)=1$,  and using (\ref{proof3}), differentiating in
(\ref{proof4}) gives us
\begin{equation}\label{proof2}
\left.\frac{d}{dt} M_0(h_t)\right\vert_{t=0} =
 \left[-\frac1{n}\int_{S^{n-1}} g h_{K_0}\, dS_{K_0}\  +\
\int_{S^{n-1}} g\, d\mu\right] {\exp\left(\int_{S^{n-1}} \log  h_{K_0}\,
d\mu\right)}.
\end{equation}
But (\ref{proof1}) shows that the function $t\mapsto M_0(h_t)$ has a
minimum at $t=0$ which gives
\[\left.\frac{d}{dt} M_0(h_t)\right\vert_{t=0} = 0,
\]
and now (\ref{proof2}) allows us to conclude that
\[
\frac1{n}\int_{S^{n-1}} g h_{K_0} \, dS_{K_0}\  =\  \int_{S^{n-1}} g\, d\mu.
\]
But since this must hold for arbitrary $g\in C_e(\sn)$,  we conclude
\[
d\mu = \frac1n h_{K_0}  dS_{K_0},
\]
as desired.
\end{proof}

\section{Necessary conditions for existence}

We shall use the method of symmetrization to prove the necessity part
of Theorem \ref{main-theorem}.
Consider a proper subspace $\xi \subset \rn$ and let $m=\dimension  \xi$.
We consider the symmetrization of the convex body $K\in \mathcal K^n_o$ with
respect to $\xi$.  For each  $x\in \piper_\xi K$, we replace
$K\cap(x+\xi^\bot)$ with the
 $(n-m)$-dimensional ball $B^{n-m}_{r(x)}(x)$ in $x+\xi^\perp$ that is
centered at $x$ and whose radius $r(x)$ is chosen so that
 $\omega_{n-m}r(x)^{n-m}=V_{n-m}(K\cap(x+\xi^\bot))$; i.e., $r(x)$ is
chosen so that $B^{n-m}_{r(x)}(x)$ has
the same $(n-m)$-dimensional volume as $K\cap(x+\xi^\bot)$. Denote the new body
by ${\ess}_\xi K$. That is,
\[
{\ess}_\xi K = \bigcup_{x\in \piper_\xi K} B^{n-m}_{r(x)}(x), \qquad
\omega_{n-m}r(x)^{n-m}=V_{n-m}(K\cap(x+\xi^\bot)).
\]
As is well known (see e.g. \cite{GarZha}), the Brunn-Minkowski
inequality (see the beautiful survey \cite{Gar02}) implies that
${\ess}_\xi K$ is
also a convex body. By Fubini's theorem, in fact, by Cavalieri's principle,
$V(K)=V({\ess}_\xi K)$. If $K$ is origin-symmetric, then
${\ess}_\xi K$ is also origin-symmetric.

For notational simplicity, denote ${\ess}_\xi K$
by $\tilde K$, and denote the
Gauss map of $K$ by $\nu$ and that of $\tilde K$ by $\tilde \nu$.
Also abbreviate $B^{n-m}_{r(x)}(x)$ by $B(x)$. If $C\subset \R^n$ is
convex and compact, then for notational simplicity, write
$\partial C$ for the relative boundary of $C$, with respect to the
affine hull of $C$.

\begin{lemma}\label{cone-measure-symmetrization}
Suppose $\xi  \subset \rn$ is a subspace of $\R^n$ such that
$0<\dimension \xi<n$, and suppose $K\in \mathcal K^n_o$.
Then the cone-volume measures of $K$ and
the symmetrization, $\tilde K$, of $K$ about $\xi $ satisfy
\begin{equation*}
V_{\tilde K} (\xi \cap \sn) = V_K (\xi \cap \sn).
\end{equation*}
\end{lemma}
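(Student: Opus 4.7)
\medskip
\noindent
\textbf{Proof plan.} The strategy rests on the identity $V_K(\omega) = \frac{1}{n}\int_\omega h_K(u)\,dS_K(u)$, valid for any Borel $\omega\subset\sn$, which is immediate from the definition of cone-volume measure together with $x\cdot\nu_K(x)=h_K(\nu_K(x))$ on $\partial' K$. In view of this, it suffices to prove two things: (i) $h_K$ and $h_{\tilde K}$ coincide on $\xi\cap\sn$, and (ii) the surface area measures $S_K$ and $S_{\tilde K}$ agree on Borel subsets of $\xi\cap\sn$.

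Set $L = \piper_\xi K$; this is an $m$-dimensional convex body in $\xi$ with the origin in its relative interior. Schwarz symmetrization preserves the projection onto $\xi$, i.e.\ $\piper_\xi \tilde K = L$, since each slice $\tilde K\cap(x+\xi^\perp)$ for $x\in L$ is a ball in $x+\xi^\perp$ centered at $x$. Because $u\cdot z = u\cdot\piper_\xi z$ whenever $u\in\xi$, we obtain $h_K(u) = h_L(u) = h_{\tilde K}(u)$ for every $u\in\xi$, which gives (i).

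For (ii), fix a Borel set $\omega\subset\xi\cap\sn$. The crucial geometric observation is this: for $u\in\xi\cap\sn$, the supporting hyperplane of $K$ with outer normal $u$, being perpendicular to $u\in\xi$, contains $\xi^\perp$ as a linear subspace. Consequently, for any smooth $y\in\partial' L$ with $\nu_L(y)=u\in\omega$, the entire slice $K\cap(y+\xi^\perp)$ lies on $\partial K$ with common outer normal $u$. Conversely, if $z\in\partial' K$ satisfies $\nu_K(z)\in\omega$, then $\piper_\xi z\in\partial L$ and $\nu_L(\piper_\xi z)=\nu_K(z)$. Hence, up to $\cH^{n-1}$-null sets,
\begin{equation*}
\nu_K^{-1}(\omega) \;=\; \bigl\{\, y+w : y\in\nu_L^{-1}(\omega),\ w\in\xi^\perp,\ y+w\in K\,\bigr\}.
\end{equation*}
Since $\xi\perp\xi^\perp$, the parameterization $(y,w)\mapsto y+w$ has unit Jacobian, and Fubini's theorem yields
\begin{equation*}
S_K(\omega) \;=\; \cH^{n-1}\!\bigl(\nu_K^{-1}(\omega)\bigr) \;=\; \int_{\nu_L^{-1}(\omega)} V_{n-m}\!\bigl(K\cap(y+\xi^\perp)\bigr)\, d\cH^{m-1}(y),
\end{equation*}
and the analogous identity with $\tilde K$ in place of $K$ follows verbatim. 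By the very definition of $\ess_\xi$, one has $V_{n-m}(K\cap(y+\xi^\perp)) = V_{n-m}(\tilde K\cap(y+\xi^\perp))$ for every $y\in\xi$, and so $S_K(\omega) = S_{\tilde K}(\omega)$.

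Combining (i) and (ii),
\begin{equation*}
V_K(\xi\cap\sn) = \tfrac{1}{n}\!\int_{\xi\cap\sn}\! h_K\,dS_K = \tfrac{1}{n}\!\int_{\xi\cap\sn}\! h_{\tilde K}\,dS_{\tilde K} = V_{\tilde K}(\xi\cap\sn).
\end{equation*}
The main technical obstacle lies in the Fubini step: one must control the contribution of singular points on both $K$ and $L$ (points with non-unique outer normals) and check that the fiber decomposition is Borel-measurable and a.e.\ bijective. Since the set of singular boundary points of any convex body has vanishing Hausdorff measure of codimension one, these exceptional sets contribute nothing to either integral, and the argument goes through cleanly.
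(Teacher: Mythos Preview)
Your proof is correct and follows essentially the same route as the paper: both arguments rest on the fiber decomposition of $\nu_K^{-1}(\xi\cap\sn)$ over $\partial(\piper_\xi K)$ and the Fubini computation that expresses the relevant $\cH^{n-1}$-integral as $\int_{\partial(\piper_\xi K)} (\cdots)\,V_{n-m}(K\cap(y+\xi^\perp))\,d\cH^{m-1}(y)$, which is manifestly invariant under $\ess_\xi$. The only cosmetic difference is that you factor the cone-volume integrand as $h_K\,dS_K$ and verify invariance of each factor separately (yielding the slightly stronger fact that $S_K$ and $S_{\tilde K}$ agree on every Borel $\omega\subset\xi\cap\sn$), whereas the paper keeps the product $z\cdot\nu(z)=d_{\piper_\xi K}(\piper_\xi z)$ intact and applies Fubini directly to the defining integral of $V_K(\xi\cap\sn)$.
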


\begin{proof} Let $m=\dimension \xi$ and let $S^{m-1}=\xi \cap S^{n-1}$.

For $z\in  \R^n$,  write $z=(x,y)$,  where
$x=\piper_\xi  z$ and $y=\piper_{\xi^\bot}z$.  We will identify $x$
with $(x,0)$.
If $z\in\nu^{-1}(S^{m-1})$,  then $x\in \partial (\piper_\xi K)$.
But for $x\in \partial (\piper_\xi K)$ and $u\in S^{m-1}=\xi \cap
S^{n-1}$, we obviously have $z\cdot u = x\cdot u$ because
$z\in x+\xi ^\bot$. This shows that $z\cdot \nu(z)$ is independent of
$y \in K\cap(x+\xi^\perp)$
for $z\in \nu^{-1}(S^{m-1})$. But $z\cdot \nu(z)$ is the supporting distance of
$\partial K$ at $z$ as well as $d_{\piper_\xi K}(x)$, the supporting
distance of $\partial
(\piper_\xi K)$ at $x$
for $\mathcal H^{m-1}$-almost all $x$ on $\partial (\piper_\xi K)$.
Similarly, for $z \in \tilde \nu^{-1}(S^{m-1})$,
we also have $z\cdot\tilde \nu(z)= d_{\piper_\xi\tilde K}(x)=
d_{\piper_\xi K}(x)$ because
$\piper_\xi K = \piper_\xi \tilde K$.
This, together with $V_{n-m}(B(x)) = V_{n-m}(K\cap(x+\xi^\bot))$ and
$\piper_\xi K = \piper_\xi\tilde K$,
gives
\begin{align*}
V_K(S^{m-1})
&=\frac1n \int_{z\in\nu^{-1}(S^{m-1})} z\cdot \nu(z)\, d\mathcal H^{n-1}(z)\\
&=\frac1n \int_{x\in\partial(\piper_\xi K)} d_{\piper_\xi
K}(x)V_{n-m}(K\cap(x+\xi^\bot))\, d\mathcal H^{m-1}(x) \\
&=\frac1n \int_{x\in\partial(\piper_\xi\tilde K)} d_{\piper_\xi\tilde
K}(x) V_{n-m}(B(x))\, d\mathcal H^{m-1}(x) \\
&=\frac1n \int_{ z\in \tilde\nu^{-1}(S^{m-1})}
z\cdot\tilde\nu(z)\,d\mathcal H^{n-1}(z) \\
&=V_{\tilde K}(S^{m-1}).
\end{align*}

\end{proof}

The necessity part of Theorem \ref{main-theorem}
is presented in Theorem \ref{necessity} below.
For origin-symmetric polytopes, the subspace concentration inequality
of Theorem \ref{necessity} was
established previously by He, Leng and Li \cite{HLL06},
with an alternate proof given by Xiong \cite{Xio}.  We now give
a new proof that is valid for arbitrary origin-symmetric convex bodies.
Applications of this theorem to reverse affine isoperimetric
inequalities were given in
\cite{HLL06} and \cite{Xio}.

\begin{theo}\label{necessity}
The cone-volume measure of an origin-symmetric convex body in $\rn$
satisfies the subspace concentration condition.
\end{theo}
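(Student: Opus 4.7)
The plan is to apply a Fubini-type decomposition together with the divergence theorem in $\xi$, using the log-concavity of slice volumes coming from Brunn--Minkowski. Fix a subspace $\xi \subset \R^n$ of dimension $m$ with $0 < m < n$, set $L = \piper_\xi K \subset \xi$ (which is origin-symmetric since $K$ is), and define the slice-volume function $f : L \to [0,\infty)$ by $f(x) = V_{n-m}(K \cap (x + \xi^\perp))$. The identity established in the proof of Lemma~\ref{cone-measure-symmetrization} is in fact valid for the original body $K$, not merely for its symmetrization, and reads
\begin{equation*}
V_K(\xi \cap \sn) = \frac{1}{n}\int_{\partial L} d_L(x)\, f(x)\, d\cH^{m-1}(x),
\end{equation*}
while Fubini gives $V(K) = \int_L f(x)\, dx$.

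I would next apply the divergence theorem in $\xi \cong \R^m$ to the vector field $X(x) = \tfrac{1}{m} f(x)\, x$. Since $x \cdot \nu_L(x) = d_L(x)$ on $\partial L$, this gives
\begin{equation*}
\int_{\partial L} d_L(x)\, f(x)\, d\cH^{m-1}(x) = m\int_L f(x)\, dx + \int_L x \cdot \nabla f(x)\, dx.
\end{equation*}
Applying Brunn--Minkowski slice by slice shows that $f^{1/(n-m)}$ is concave on $L$, so $f$ is log-concave; since $K$ is origin-symmetric, $f$ is even on $L$. Hence $\log f$ attains its maximum at the origin and is non-increasing along every ray from $0$, which forces $x \cdot \nabla(\log f)(x) \leq 0$ and therefore $x \cdot \nabla f(x) \leq 0$ pointwise. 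Combining these identities and the inequality yields
\begin{equation*}
V_K(\xi \cap \sn) \leq \frac{m}{n} V(K) = \frac{m}{n}|V_K|,
\end{equation*}
which is the subspace concentration inequality.

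For the equality case, $V_K(\xi \cap \sn) = \tfrac{m}{n}V(K)$ forces $\int_L x \cdot \nabla f\, dx = 0$. Since the integrand is pointwise non-positive, $f$ must be constant along every ray through the origin, hence $f \equiv f(0)$ on $L$. The equality case of Brunn--Minkowski then forces every slice $K \cap (x + \xi^\perp)$ to be a translate of $K_0 := K \cap \xi^\perp$ by some $v(x) \in \xi^\perp$; convexity of $K$ makes $v$ affine and origin-symmetry makes it linear, so $K = L' + K_0$ with $L' = \{x + v(x) : x \in L\}$ sitting inside the $m$-dimensional subspace $\eta = \{x + v(x) : x \in \xi\}$. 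One checks that $\xi' := \eta^\perp$ is complementary to $\xi$ in $\R^n$, and a direct decomposition of the normal cones of the Minkowski sum $L' + K_0$ shows that $\cH^{n-1}$-almost every outer unit normal of $K$ lies in $(\xi \cup \xi') \cap \sn$. Since $|V_K| = V(K)$, this gives $V_K(\xi' \cap \sn) = V(K) - V_K(\xi \cap \sn) = \tfrac{n-m}{n}V(K)$, producing the complementary subspace required by the subspace concentration condition.

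The main technical obstacle is the regularity required for the divergence-theorem step: log-concavity of $f$ only guarantees continuity on $\relinterior L$, not smoothness up to $\partial L$, and $\nabla f$ is merely a distributional object in general. I would handle this by approximating $K$ by origin-symmetric convex bodies with smooth, strongly convex boundaries (for which $f$ becomes $C^\infty$ on $\relinterior L$ with sufficient boundary regularity), running the above argument in the smooth regime, and passing to the limit using the weak continuity of surface area measures on $\cK^n_o$ together with the continuity of volume. The other delicate step is the analysis of equality in Brunn--Minkowski, together with the verification that $\cH^{n-1}$-almost every facet normal of $L' + K_0$ lies in $\xi \cup \xi'$.
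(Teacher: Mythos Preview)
Your core strategy is sound and genuinely different from the paper's: instead of Schwarz-symmetrizing $K$ with respect to $\xi$ and then comparing two Fubini-type integrals for $\tilde K$, you work directly with $K$ via the slice-volume function $f$ and integration by parts, using that $f$ is even and log-concave (Brunn--Minkowski) so that $x\cdot\nabla f\le 0$. The equality analysis you give (constant $f$ $\Rightarrow$ translates $\Rightarrow$ $K=L'+K_0$ $\Rightarrow$ normals in $\xi\cup\xi'$ with $\xi'=\eta^\perp$) is essentially the same as the paper's, and your choice of $\xi'$ coincides with theirs.

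There is, however, a real gap in your proposed handling of regularity. Approximating $K$ by smooth, strongly convex $K_\varepsilon$ does \emph{not} recover the inequality for $K$: for such $K_\varepsilon$ the Gauss map is a diffeomorphism, so $S_{K_\varepsilon}$ is absolutely continuous with respect to $\cH^{n-1}$ on $S^{n-1}$, and hence $V_{K_\varepsilon}(\xi\cap S^{n-1})=0$ for every proper subspace $\xi$. The inequality $0\le\tfrac{m}{n}V(K_\varepsilon)$ is vacuous, and weak convergence of cone-volume measures only gives the Portmanteau bound $\limsup_\varepsilon V_{K_\varepsilon}(\xi\cap S^{n-1})\le V_K(\xi\cap S^{n-1})$, which is the wrong direction. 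In short, mass on the thin set $\xi\cap S^{n-1}$ evaporates under smoothing, so nothing survives in the limit; this also blocks any equality analysis via approximation.

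The fix is easy and in fact eliminates the need for the divergence theorem altogether. Write both sides in polar coordinates on $L$: using $d_L(x)\,d\cH^{m-1}(x)=\rho_L(u)^m\,d\cH^{m-1}(u)$ on $\partial L$, one has
\[
\int_{\partial L} d_L(x)\,f(x)\,d\cH^{m-1}(x)=\int_{S^{m-1}} \rho_L(u)^m\, f(\rho_L(u)u)\,du,
\qquad
\int_L f=\int_{S^{m-1}}\int_0^{\rho_L(u)} f(ru)\,r^{m-1}\,dr\,du.
\]
Since $f$ is even and log-concave, $r\mapsto f(ru)$ is nonincreasing on $[0,\rho_L(u)]$, so $f(ru)\ge f(\rho_L(u)u)$, and integrating $r^{m-1}$ gives $\int_{\partial L}d_L f\le m\int_L f$ directly, with equality iff $f$ is constant along a.e.\ ray, hence constant. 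This is rigorous for arbitrary $K\in\cK^n_e$ and feeds straight into your equality analysis. (This radial-monotonicity step is, incidentally, the same mechanism the paper exploits after symmetrization; your route simply shows the symmetrization is unnecessary.)
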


\begin{proof}
Suppose $K$ is an origin-symmetric convex body in $\rn$. Let $\xi\subset
\rn$ be a subspace
such that $0<\dimension  \xi =m<n$, and let $S^{m-1}= \xi\cap
S^{n-1}$. Let $\tilde K$ be the
symmetrization of $K$ with
respect to $\xi$.

For $y\in B(o)=\piper_{\xi^\bot}\tilde K$, let $\rho_{-y+\tilde
K}=\rho_{\xi\cap (-y+\tilde K)}: S^{m-1} \to (0,\infty)$, be the
radial function of $\xi\cap (-y+\tilde
K)$.

There are three basic observations here.  First, for each
$u\in S^{m-1}$, the value of $\rho_{ -y+\tilde K}(u)$ is independent of $y\in
\piper_{\xi^\bot} B(\rho_{ \tilde K}(u)\,u)$. Then, that for each $z=(x,y)\in
\tilde\nu^{-1}(S^{m-1})$,  the value of  $z\cdot \tilde \nu(z)$
is independent of
$y \in (x+\xi^\perp)\cap\tilde  K =B(x)$. And
that $z\cdot \tilde\nu(z)$, the supporting distance of
$\partial\tilde K$ at $z$, is equal to $d_{\piper_\xi \tilde K}(x)$,
the supporting distance of $\partial
(\piper_\xi \tilde K)$ at $x$,
for $\mathcal H^{m-1}$-almost all $x$ on $\partial (\piper_\xi \tilde K)$.
From this and the definition of cone-volume measure
we have
\begin{eqnarray*}
V_{\tilde K}(S^{m-1})
&=&\frac1n\int_{z\in\tilde\nu^{-1}(S^{m-1})}
 z\cdot \tilde\nu(z) \, d{\mathcal H}^{n-1}(z)\\
&=&\frac1n\int_{x\in \partial (\piper_\xi \tilde K)}
d_{\piper_\xi \tilde K}(x) \, V_{n-m}\left(B(x)\right)\,d {\mathcal
H}^{m-1}(x)\\
&=&\frac1n\int_{u\in S^{m-1}}
\rho_{ \tilde K}(u)^m \, V_{n-m}\left(B(\rho_{ \tilde K}(u)u)\right)
\,d {\mathcal H}^{m-1}(u)\\
&=&\frac1n\int_{u\in S^{m-1}}\left(\int_{y\in
\piper_{\xi^\bot}B(\rho_{ \tilde K}(u)u)} \rho_{-y+\tilde K}(u)^m \,d
{\mathcal H}^{n-m}(y)\right)\,d {\mathcal H}^{m-1}(u),
\end{eqnarray*}
where in going from the second to the third line we changed variables
$x=\rho_{ \tilde K}(u)u$, for $u\in S^{m-1}$, and
used the fact that
\[
d_{\piper_\xi \tilde K}(x) \, d\mathcal H^{m-1}(x) = \rho_{ \tilde
K}(u)^m \, d\mathcal
H^{m-1}(u),
\]
(see e.g.,\cite{LutYanZha02duke}, Lemma~2).

Since $B(o)$ is maximal, $\tilde K$ is obviously the (disjoint) union
of the fibers $(y+\xi)\cap{\tilde K}$, with $y\in B(o)$. Thus,
\begin{eqnarray*}
V(\tilde K)
&=&\int_{y\in B(o)}V_m((y+\xi)\cap \tilde K)\,d {\mathcal H}^{n-m}(y)\\
&=&\int_{y\in B(o)}V_m(\xi\cap (-y+\tilde K))\,d {\mathcal H}^{n-m}(y)\\
&=&\frac1m\int_{y\in B(o)}\int_{u\in S^{m-1}}\rho_{-y+\tilde
K}(u)^m\,d {\mathcal
H}^{m-1}(u)\,d {\mathcal H}^{n-m}(y).
\end{eqnarray*}
Since $K$ is origin-symmetric, the ball $B(x)$ attains its maximum size when
 $x=o$. Thus, $\piper_{\xi^\bot}B(\rho_{\tilde K}(u)u) \subseteq
B(o)$, and we get
\[
V_{\tilde K}(S^{m-1}) \le \frac mn V(\tilde K),
\]
with equality if and only if  $\piper_{\xi^\bot}B(\rho_{\tilde K}(u)u)
= B(o)$, for all $u\in S^{m-1}$.
Since $V(K)=V(\tilde K)$, Lemma \ref{cone-measure-symmetrization} now
allows us to
conclude that
\begin{equation}\label{cone-subspace-inequality}
V_K(\xi\cap S^{n-1}) \le \frac mn \, V_K(\sn),
\end{equation}
that is,  the subspace concentration inequality holds for the
cone-volume measure of $K$.

To show that the cone-volume measure
satisfies the subspace concentration condition, we now analyze
the implications of equality in  \eqref{cone-subspace-inequality}. To that end,
assume that there is equality in (\ref{cone-subspace-inequality}).
Then $\piper_{\xi^\bot}B(\rho_{\tilde K}(u)u) = B(o)$, for all $u\in S^{m-1}$.
Suppose $x\in\piper_\xi K\setminus\{o\}$. The Brunn-Minkowski inequality yields
$\piper_{\xi^\bot}B(\rho_{\tilde K}(u)u)\subset \piper_{\xi^\bot} B(x)
\subset B(o)$
for $u=x/|x|$.
Therefore,
$\piper_{\xi^\bot}B(x)=B(o)$ for each $x\in\piper_\xi K$, and hence
$V_{n-m}((x+\xi^\bot)\cap K)$
is independent of  $x\in\piper_\xi K$. The equality conditions of the
Brunn-Minkowski
inequality tell us that this can only happen when $(x+\xi^\bot)\cap K$ is
a translate of  $\xi^\bot\cap K$ for each $x\in\piper_\xi K$.

Since  $K$ is origin symmetric, so is $\xi^\bot\cap K$.  Thus, for
each $x\in\piper_\xi K$ the body $(x+\xi^\bot)\cap K$  (being just a
translate of $\xi^\bot\cap K$) has a center of symmetry.
As $x$ traverses a line segment in $\piper_\xi K$,  the convexity
of $K$ guarantees that all the boundary points of  $(x+\xi^\bot)\cap K$ traverse
parallel line segments,
and thus the center of $(x+\xi^\bot)\cap K$ also traverses a parallel
line segment as well.
Therefore, as $x$ varies in $\piper_\xi K$, the center of
$(x+\xi^\bot)\cap K$
lies in an $m$-dimensional origin-symmetric convex compact set, which is
$$
C=\{z\in K : \,z+(\xi^\bot\cap K)\subset K\}.
$$
 Thus
 \[
 K=(\xi^\bot\cap K)+ C,
 \]
which shows that we can write $K$ as the Minkowski sum of an
$(n-m)$-dimensional and an $m$-dimensional convex set.

Let $\xi'$ be the orthogonal complement
of the linear hull of $C$. Now,
\[
\partial K \subseteq (\partial(\xi^\bot\cap K) + \relinterior
C)\cup(\relinterior (\xi^\bot\cap K) + \partial C)\cup
(\partial(\xi^\bot\cap K) + \partial C).
\]
But  $\nu(\partial(\xi^\bot\cap K) + \relinterior C)\subset \sn\cap\xi'$ and
$\nu(\relinterior (\xi^\bot\cap K) + \partial C)\subset \sn \cap\xi$, while
 $\cH^{n-1}(\partial(\xi^\bot\cap K) + \partial C)=0$.
Therefore, the cone-volume measure $V_K$
is concentrated on $\sn \cap (\xi\cup \xi')$.
\end{proof}

\section{Minimizing the logarithmic functional}

In this section, we prove that the necessity condition for cone-volume
measures is sufficient to
imply the existence of a solution of the minimization problem
(\ref{min-problem}).

We shall require the following trivial fact:

\begin{lemma}
\label{alphaomega}
If real $\alpha_1,\ldots,\alpha_n \ge 0$ satisfy
\begin{equation}\label{sumless}
\frac{\alpha_1+\cdots+\alpha_i}{i}<\frac1n,\text{\qquad for all
$i=1,\ldots,n-1$,}
\end{equation}
while
\begin{equation}\label{sumeq}
\alpha_1+\cdots+\alpha_n=1,
\end{equation}
then  there exists $t \in (0,1]$
such that for $\lambda =(1-t)/n,$ and
\begin{equation}\label{defbeta}
\beta_i=\alpha_i-\lambda,\  \text{for all  $i=1,\ldots,n-1$, while}
 \  \beta_n=\alpha_n - \lambda - t,
\end{equation}
we have
$$
\beta_1+\cdots+\beta_i \leq 0, \quad \text{for all $i=1,\ldots,n-1$,}
$$
and
$$
\beta_1+\cdots+\beta_n =0.
$$
\end{lemma}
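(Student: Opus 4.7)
The plan is to observe that the last required equality is automatic, to rewrite the partial-sum inequalities as explicit upper bounds on $t$, and then to take $t$ to be the minimum of those upper bounds.

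First, a direct computation shows that $\beta_1+\cdots+\beta_n = (\alpha_1+\cdots+\alpha_n) - n\lambda - t = 1 - (1-t) - t = 0$ for every choice of $t$, so condition (\ref{sumeq}) forces the final equality no matter what. Hence only the inequalities for $i \le n-1$ impose real constraints.

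Next, for $1 \le i \le n-1$, using (\ref{defbeta}) and $\lambda=(1-t)/n$,
\[
\beta_1+\cdots+\beta_i \;=\; (\alpha_1+\cdots+\alpha_i) - i\lambda \;=\; (\alpha_1+\cdots+\alpha_i) - \frac{i(1-t)}{n},
\]
so the required inequality $\beta_1+\cdots+\beta_i \le 0$ is equivalent to
\[
t \;\le\; 1 - \frac{n(\alpha_1+\cdots+\alpha_i)}{i}.
\]
By hypothesis (\ref{sumless}) each right-hand side is strictly positive, and because $\alpha_j \ge 0$ each right-hand side is at most $1$. Therefore the number
\[
t \;=\; \min_{1\le i \le n-1}\left(1 - \frac{n(\alpha_1+\cdots+\alpha_i)}{i}\right)
\]
lies in $(0,1]$ and simultaneously satisfies all $n-1$ constraints.

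There is essentially no obstacle here; the only subtlety is to notice where each endpoint of $t \in (0,1]$ comes from, namely that $t>0$ uses the strictness in (\ref{sumless}) and $t\le 1$ uses the nonnegativity of the $\alpha_i$. With $t$ so chosen and $\lambda=(1-t)/n$, defining $\beta_i$ by (\ref{defbeta}) yields the desired conclusion.
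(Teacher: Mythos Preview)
Your proof is correct and is essentially the same as the paper's: the paper picks $i_o$ maximizing $(\alpha_1+\cdots+\alpha_i)/i$ over $i\le n-1$ and sets $\lambda$ equal to that maximum, which is exactly your choice $t=\min_{i\le n-1}\bigl(1-n(\alpha_1+\cdots+\alpha_i)/i\bigr)$ rewritten. The verification of the partial-sum inequalities and the final equality is identical.
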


\begin{proof}  Choose an $i_o  \in\{1,\ldots,n-1\}$ such that
$$
\frac{\alpha_1+\cdots+\alpha_i}{i} \le
\frac{\alpha_1+\cdots+\alpha_{i_o} }{i_o }  \qquad \text{for all
$i=1,\ldots,n-1$.}
$$
From (\ref{sumless}) we see there exists a $t\in (0,1]$ such that
\begin{eqnarray*}
\frac{\alpha_1+\cdots+\alpha_{i_o}}{i_o}  &=& (1-t)\frac1n=\lambda, \\
\frac{\alpha_1+\cdots+\alpha_i}i &\leq& (1-t) \frac1n=\lambda,
\qquad\text{for all  $i=1,\ldots,n-1.$}
\end{eqnarray*}
Then
\begin{align*}
\beta_1 + \cdots + \beta_i &= \alpha_1 + \cdots + \alpha_i - i\lambda
\le 0, \qquad
\text{for all  $i=1,\ldots,n-1,$}   \\
\beta_1 + \cdots + \beta_{n} &= \alpha_1 + \cdots + \alpha_{n} -
n\lambda -t = 1-n\lambda -t = 0.
\end{align*}
\end{proof}

\begin{lemma}\label{crosslimit}
Suppose $\mu$ is a probability measure  on $S^{n-1}$ that satisfies the strict
subspace concentration inequality. For each positive integer $l$,
let $u_{1,l},\ldots,u_{n,l}$ be an orthonormal basis of $\R^n$, and
suppose that $ h_{1,l}, \ldots, h_{n,l} $ are $n$ sequences of
positive real numbers
such that
$h_{1,l}\leq \cdots\leq h_{n,l}$, and such that the product
$h_{1,l}\cdots h_{n,l}\geq 1$,
and $\lim_{l\to\infty}h_{n,l}=\infty$.
Then, for the cross-polytopes
$Q_l=[\pm h_{1,l}u_{1,l},\ldots,\pm h_{n,l}u_{n,l}]$, the sequence
$$
\Phi_\mu(Q_l) = \int_{S^{n-1}} \log h_{Q_l}\, d\mu
$$
is not bounded from above.
\end{lemma}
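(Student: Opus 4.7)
The plan is to argue by contradiction: assume $\Phi_\mu(Q_l) \leq M$ for all $l$, and extract a subsequence along which the integrals actually diverge. By compactness of $S^{n-1}$, pass to a subsequence so that $u_{i,l} \to u_{i,\infty}$, still an orthonormal basis. Since $h_{i,l} \leq h_{n,l}$ and $\prod_j h_{j,l} \geq 1$, the normalized logarithms $a_{i,l} := \log h_{i,l}/\log h_{n,l}$ lie in $[-(n-1),1]$, so after a further subsequence $a_{i,l}\to a_i$ with $a_1 \leq \cdots \leq a_n = 1$ and $\sum_i a_i \geq 0$.

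Set $\xi_{i,\infty} = \mathrm{span}(u_{1,\infty},\ldots,u_{i,\infty})$, $\omega_{i,\infty} = (\xi_{i,\infty}\setminus\xi_{i-1,\infty})\cap S^{n-1}$, and $\alpha_i = \mu(\omega_{i,\infty})$. These sets partition $S^{n-1}$ and $\sum_i\alpha_i = 1$, while the strict subspace concentration inequality applied to $\xi_{i,\infty}$ gives $\sum_{j\leq i}\alpha_j = \mu(\xi_{i,\infty}\cap S^{n-1}) < i/n$ for $i<n$. Lemma \ref{alphaomega} then supplies $t\in(0,1]$ and associated $\beta_j$'s whose partial sums are $\leq 0$ and whose total sum is $0$. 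Summation by parts, using $a_1 \leq \cdots \leq a_n$, yields
\[
\sum_i\beta_i a_i \;=\; -\sum_{i=1}^{n-1}\Bigl(\sum_{j\leq i}\beta_j\Bigr)(a_{i+1}-a_i) \;\geq\; 0,
\]
which, when unpacked via $\beta_j = \alpha_j - \lambda$ (and $\beta_n = \alpha_n - \lambda - t$), becomes $\sum_i\alpha_i a_i \geq \lambda\sum_i a_i + ta_n \geq t > 0$, since $\lambda=(1-t)/n\geq 0$, $\sum_i a_i \geq 0$, and $a_n = 1$.

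To conclude, set $f_l(u) = \log h_{Q_l}(u)/\log h_{n,l}$. Since $\max_i|u\cdot u_{i,l}|\geq 1/\sqrt{n}$ (as $\sum_i (u\cdot u_{i,l})^2 = 1$), we have $h_{Q_l}(u) \geq h_{1,l}/\sqrt{n}$, so $f_l(u) \geq a_{1,l} - (\log n)/(2\log h_{n,l})$ is uniformly bounded below for large $l$, permitting the use of Fatou's Lemma. For each $u \in \omega_{i,\infty}$, the convergence $|u\cdot u_{i,l}|\to|u\cdot u_{i,\infty}|>0$ implies $\log|u\cdot u_{i,l}|$ stays bounded, whence $f_l(u) \geq a_{i,l} + \log|u\cdot u_{i,l}|/\log h_{n,l} \to a_i$, so $\liminf_l f_l(u) \geq a_i$ on $\omega_{i,\infty}$. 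Fatou's Lemma now gives
\[
\liminf_l \frac{\Phi_\mu(Q_l)}{\log h_{n,l}} \;=\; \liminf_l \int f_l\,d\mu \;\geq\; \sum_i a_i\alpha_i \;\geq\; t \;>\; 0,
\]
so $\Phi_\mu(Q_l)\to\infty$ along this subsequence, contradicting $\Phi_\mu(Q_l)\leq M$. The main obstacle is that the naive pointwise bound $\log h_{Q_l}(u)\geq\log h_{i,l}+\log|u\cdot u_{i,l}|$ produces error terms $\log|u\cdot u_{i,l}|$ that need not be uniformly bounded as $l$ varies; normalizing by $\log h_{n,l}$ makes these errors vanish in the limit, after which the Abel-summation mechanism from Lemma \ref{alphaomega} can be applied cleanly.
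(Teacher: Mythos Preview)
Your argument is correct and reaches the same conclusion as the paper, but the route is genuinely different. The paper also passes to a subsequence so that $u_{i,l}\to u_i$ and applies Lemma~\ref{alphaomega}, but it never normalizes by $\log h_{n,l}$ or invokes Fatou. Instead, it replaces your limit partition $\omega_{i,\infty}=(\xi_{i,\infty}\setminus\xi_{i-1,\infty})\cap S^{n-1}$ by $\eta$-thickened sets
\[
A_{i,\eta}=\{v\in S^{n-1}:\ |v\cdot u_i|\ge\eta\ \text{and}\ |v\cdot u_j|<\eta\ \text{for }j>i\},
\]
shows $\mu(A_{i,\eta})\to\mu(\omega_{i,\infty})$ as $\eta\to0^+$, and fixes $\eta_o$ small enough that the partial sums of $\alpha_i:=\mu(A_{i,\eta_o})$ still satisfy the strict inequality needed for Lemma~\ref{alphaomega}. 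On $A_{i,\eta_o}$ one then has the \emph{uniform} bound $|v\cdot u_{i,l}|\ge\eta_o/2$ for large $l$, hence $h_{Q_l}(v)\ge\tfrac{\eta_o}{2}h_{i,l}$, and the Abel summation is carried out directly with the numbers $\log h_{i,l}$ (not their limits), yielding the explicit estimate $\int\log h_{Q_l}\,d\mu\ge\log\bigl(\tfrac{\eta_o}{2}h_{n,l}^t\bigr)$.

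The trade-off: your normalization by $\log h_{n,l}$ kills the troublesome error terms $\log|u\cdot u_{i,l}|$ in the limit and lets you work with the natural partition $\{\omega_{i,\infty}\}$, at the cost of an extra compactness step (for the $a_{i,l}$) and an appeal to Fatou. The paper's $\eta$-thickening is more hands-on and yields an explicit quantitative lower bound valid for all large $l$, without Fatou, but requires the somewhat delicate verification that $\mu(A_{i,\eta})\to\mu(\omega_{i,\infty})$. Both arguments ultimately hinge on the same Abel-summation identity coming out of Lemma~\ref{alphaomega}.
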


\begin{proof}
For each $l$, we obviously have

\begin{equation}
\label{cross-support}
h_{Q_l}(v)=\max_{1\le i\le n}h_{i,l}|v \cdot u_{i,l}|,
\end{equation}
for $v\in S^{n-1}$.
After taking suitable subsequences,
we may conclude the existence of
an orthonormal basis $u_1,\ldots,u_n$
of $\R^n$ with

\begin{equation}\label{ulimit}
\lim_{l\to\infty}u_{i,l}=u_i\in S^{n-1}\qquad
\text{ for all $i=1,\ldots,n$}.
\end{equation}

For $i=1,\ldots,n$, let
$\xi_i$ be the subspace spanned by $\{u_1,\ldots,u_i\}$, and for convenience let
$\xi_{0}=\{o\}$. Since $\mu$ is a probability measure that
satisfies the strict subspace concentration inequality,
we have

\begin{equation}\label{muLi}
\sum_{j=1}^i \mu\left(S^{n-1}\cap (\xi_j\backslash \xi_{j-1}) \right)
=\mu\left(S^{n-1}\cap \xi_i \right)< \frac in,
\end{equation}
for $i=1,\ldots,n-1$.

Next we observe that for each $v\in S^{n-1}$, there exists a
$u_i$  such that $|v\cdot u_i| \geq 1/\sqrt{n}$. This simple observation
can be used to easily see that for each $\eta\in(0,1/\sqrt{n})$, the sets

\[
{A}_{i,\eta}=\{v\in S^{n-1}:\,
|v\cdot u_i|\geq \eta
\mbox{ \ and $|v\cdot u_j|<\eta$ for $j>i$}\}, \quad i=1,\ldots,n,
\]
 form a disjoint partition of  $S^{n-1}$.  Define

 \[
{B}_{i,\eta}=\{v\in S^{n-1}:\,
|v\cdot u_i|  >  0
\mbox{ \ and $|v\cdot u_j|<\eta$ for $j>i$}\}, \quad i=1,\ldots,n.
\]

 We claim that

\begin{equation}
\label{Xilimit}
\lim_{\eta\to 0^+}\mu({A}_{i,\eta})=
\mu\left(S^{n-1}\cap (\xi_i\backslash \xi_{i-1})\right).
\end{equation}
To see this, first note as $\eta$ decreases to $0$,
the sets ${A}_{i,\eta}\cap \xi_i$ form
an increasing family (with respect to set inclusion)
whose union (over $\eta$) is $S^{n-1}\cap(\xi_i\backslash \xi_{i-1})$.
It follows that

\[
\liminf_{\eta\to 0^+}\mu({A}_{i,\eta})\geq
\lim_{\eta\to 0^+}\mu({A}_{i,\eta}\cap \xi_i)=
\mu\left(S^{n-1}\cap (\xi_i\backslash \xi_{i-1})\right).
\]
Obviously, ${A}_{i,\eta}\subset {B}_{i,\eta}$.  As $\eta$ is
decreasing (to $0$), the sets
${B}_{i,\eta}$ form a decreasing family (with respect to set
inclusion) whose intersection (over $\eta$) is $S^{n-1}\cap (\xi_i\backslash
\xi_{i-1})$. Thus,

\begin{equation*}
\limsup_{\eta\to 0^+}\mu({A}_{i,\eta}) \leq
\lim_{\eta\to 0^+}\mu( {B}_{i,\eta})
=\mu(S^{n-1}\cap (\xi_i\backslash \xi_{i-1})),
\end{equation*}
which gives (\ref{Xilimit}).

Combining (\ref{Xilimit}) with (\ref{muLi})
shows that we may choose ${\eta_o}\in(0,1/\sqrt{n})$ small enough to satisfy

\begin{equation*}
\frac1i\sum_{j=1}^i \mu\left({A}_{j,{\eta_o}} \right)< \frac 1n,
\qquad\text{for all  $i=1,\ldots,n-1$.}
\end{equation*}
\medskip

But, since $\mu$ is a probability measure on $\sn$ and the ${A}_{j,{\eta_o}}$
form a disjoint partition
of $\sn$, we have

\[
\sum_{j=1}^n \mu({A}_{j,{\eta_o}})=1.
\]

Letting $\alpha_j=\mu({A}_{j,{\eta_o}})$, for $j=1,\ldots,n$,
Lemma~\ref{alphaomega} yields
a $t \in (0,1]$
such that for $\lambda =(1-t)/n,$ and
\begin{equation}\label{monika0}
\beta_i=\alpha_i-\lambda,\  \text{for all  $i=1,\ldots,n-1$,}\quad \text{while}
 \  \beta_n=\alpha_n - \lambda - t,
\end{equation}
we have
\begin{equation}\label{monika1}
\beta_1+\cdots+\beta_i \leq 0, \quad \text{for all $i=1,\ldots,n-1$,}
\end{equation}
and
\begin{equation}\label{monika2}
\beta_1+\cdots+\beta_n =0.
\end{equation}
(Note that ${\eta_o}$ and $t$ are positive
numbers independent of $l$.)
\smallskip

By  (\ref{ulimit}), we have $|u_{i,l}-u_i|<\frac {\eta_o} 2$, for all
$i=1,\ldots,n$
and sufficiently large $l$. Then for $u\in {A}_{i,{\eta_o}}$,
\begin{align*}
|u\cdot u_{i,l}| &\ge |u\cdot u_i| -|u\cdot (u_{i,l}-u_i)| \\
&\ge |u\cdot u_i| - |u_{i,l}-u_i| \\
&\ge \frac {\eta_o} 2,
\end{align*}
for all $i=1,\ldots,n$.
It follows from (\ref{cross-support}) that for $u\in{A}_{i,{\eta_o}}$,
\begin{equation}\label{cross-support-estimate}
 h_{Q_l}(u)\geq \frac {\eta_o} 2 h_{i,l},
\end{equation}
for all $i=1,\ldots,n$ and sufficiently large $l$.
\smallskip

It will be convenient to define $h_{n+1,l}=1$, for all $l$. For
sufficiently large $l$, using
the fact that the ${A}_{i,{\eta_o}}$ form a partition of $S^{n-1}$, followed
by \eqref{cross-support-estimate},
again using the fact that the ${A}_{i,{\eta_o}}$ form a partition of
$S^{n-1}$ and that $\mu$ is a probability measure, then
using \eqref{monika0},
then the fact that $\lambda$ is non-negative and the hypothesis that
$h_{1,l}\cdots h_{n,l}\geq 1$, for all $l$,
and finally from $0<h_{i,l}\leq h_{{i+1},l}$, for
$i=1,\ldots, n-1$, and \eqref{monika1} together with \eqref{monika2}, we get

\begin{eqnarray}
\nonumber
\int_{S^{n-1}} \log h_{Q_l}\,d\mu
&=&\sum_{i=1}^n\int_{{A}_{i,{\eta_o}}}\log h_{Q_l}\,d\mu \\
\nonumber
&\geq&
\log\frac{\eta_o}2 \sum_{i=1}^n\mu({A}_{i,{\eta_o}})
+\sum_{i=1}^n\mu({A}_{i,{\eta_o}})\log h_{i,l}\\
\nonumber
&=&
\log\frac{\eta_o}2+\sum_{i=1}^n\alpha_i\log h_{i,l}\\
\nonumber
&=& \log\frac{\eta_o}2+t\log h_{n,l}+\sum_{i=1}^n\lambda\log h_{i,l}+
\sum_{i=1}^n\beta_i\log h_{i,l}\\
\nonumber
&=& \log \left( \frac{\eta_o}2 h_{n,l}^t \right)+ \lambda\log (h_{1,l}
\cdots h_{n,l}) +
\sum_{i=1}^n\beta_i\log h_{i,l}\\
\nonumber
&\geq& \log \left(\frac{\eta_o}2 h_{n,l}^t\right)+
\sum_{i=1}^n\beta_i\log h_{i,l} \\
\nonumber
&=& \log\left(\frac{\eta_o}2 h_{n,l}^t\right)+
\sum_{i=1}^{n}(\beta_1+\cdots+\beta_i)
(\log h_{i,l}-\log h_{i+1,l}) \\
\label{int-theta-error}
&\geq& \log \left(\frac{\eta_o}2 h_{n,l}^t\right).
\end{eqnarray}
Since $t>0$, and by hypothesis $\lim_{l\to\infty}h_{n,l}= \infty$,
from \eqref{int-theta-error}
it follows that
\[
\lim_{l\to\infty}\int_{S^{n-1}}\log h_{Q_l}\,d\mu=\infty.
\]
\end{proof}

\begin{theo}\label{logmin0strict} Suppose $n\geq 2$, and $\mu$
is an even finite Borel measure on $S^{n-1}$ that
satisfies the strict subspace concentration inequality.  Then
there exists an origin-symmetric convex body $K \in \mathcal K^n_e$
such that
\[
\inf\left\{\int_{\sn} \log h_Q \, d\mu :  \text{$V(Q)=|\mu|$ and $Q \in
\mathcal K^n_e$} \right\}=\int_{\sn} \log h_K \, d\mu.
\]
\end{theo}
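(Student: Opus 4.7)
The plan is to apply the direct method of the calculus of variations. Since the ball of volume $|\mu|$ provides a finite comparison value, the infimum is finite; choose a minimizing sequence $\{Q_l\}\subset\mathcal K^n_e$ with $V(Q_l)=|\mu|$ and $\Phi_\mu(Q_l)\to\inf\Phi_\mu$, so in particular $\Phi_\mu(Q_l)\le M$ for some constant $M$. The strategy is to show that $\{Q_l\}$ lies in a Hausdorff-bounded subset of $\mathcal K^n_e$, extract a convergent subsequence via the Blaschke selection theorem, and then use continuity of $\Phi_\mu$ to conclude that the limit attains the infimum.

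The heart of the matter is the compactness step, which is where the strict subspace concentration inequality enters via Lemma \ref{crosslimit}. For each $l$, apply John's theorem in its origin-symmetric form to obtain an ellipsoid $E_l$ with $E_l\subset Q_l\subset\sqrt{n}\,E_l$; let $h_{1,l}\le\cdots\le h_{n,l}$ denote its semi-axes along an orthonormal basis $u_{1,l},\dots,u_{n,l}$ of $\R^n$. The inscribed cross-polytope $P_l=[\pm h_{1,l}u_{1,l},\dots,\pm h_{n,l}u_{n,l}]$ has its vertices on $E_l$, so $P_l\subset E_l\subset Q_l$, hence $h_{P_l}\le h_{Q_l}$ pointwise on $\sn$ and $\Phi_\mu(P_l)\le\Phi_\mu(Q_l)\le M$. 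On the other hand, $|\mu|=V(Q_l)\le n^{n/2}V(E_l)=n^{n/2}\omega_n h_{1,l}\cdots h_{n,l}$, which yields a positive lower bound $h_{1,l}\cdots h_{n,l}\ge c_0$ with $c_0=|\mu|/(n^{n/2}\omega_n)$.

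Now suppose for contradiction that $\{Q_l\}$ is not Hausdorff-bounded. Since $Q_l\supset[-h_{n,l}u_{n,l},h_{n,l}u_{n,l}]$, unboundedness of circumradii forces $h_{n,l}\to\infty$ along a subsequence. Apply Lemma \ref{crosslimit} to the probability measure $\mu/|\mu|$ (which inherits the strict subspace concentration inequality) and to the rescaled semi-axes $\tilde h_{i,l}=c_0^{-1/n}h_{i,l}$, whose product is at least $1$ and with $\tilde h_{n,l}\to\infty$. The lemma gives $\int_{\sn}\log h_{c_0^{-1/n}P_l}\,d(\mu/|\mu|)\to\infty$; undoing the scaling and multiplying by $|\mu|$ forces $\Phi_\mu(P_l)\to\infty$ and hence $\Phi_\mu(Q_l)\to\infty$, contradicting $\Phi_\mu(Q_l)\le M$.

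Hence $\{Q_l\}$ is Hausdorff-bounded, and Blaschke selection yields a subsequence $Q_{l_k}\to K$ in Hausdorff metric, with $K$ compact, convex, and origin-symmetric. Continuity of volume gives $V(K)=|\mu|>0$, so $K$ has non-empty interior containing the origin (by symmetry), that is, $K\in\mathcal K^n_e$. Then $h_K\in C^+(\sn)$ has a positive minimum, and uniform convergence $h_{Q_{l_k}}\to h_K$ on $\sn$ gives uniform convergence $\log h_{Q_{l_k}}\to\log h_K$, whence $\Phi_\mu(Q_{l_k})\to\Phi_\mu(K)$ and $K$ attains the infimum. The main obstacle is the compactness step: the pointwise lower bound $h_{Q_l}\ge h_{P_l}$ furnished by the inscribed John cross-polytope is what allows Lemma \ref{crosslimit}, and thereby the strict subspace concentration inequality, to rule out escape to infinity of the minimizing sequence.
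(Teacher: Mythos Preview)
Your proof is correct and follows essentially the same approach as the paper's: a direct-method argument using John's ellipsoid to compare $Q_l$ with an inscribed cross-polytope, a volume bound on the product of semi-axes, and Lemma~\ref{crosslimit} to rule out blow-up of the minimizing sequence before applying Blaschke selection. The only cosmetic differences are that the paper normalizes to $|\mu|=1$ at the outset and obtains its product lower bound via $Q_l\subset nC_l$ rather than $Q_l\subset\sqrt{n}\,E_l$; your version also spells out more carefully why the limit $K$ has nonempty interior and why $\Phi_\mu$ is continuous at $K$.
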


\begin{proof} Without loss of generality, assume that $\mu$ is
a probability measure.
Take a sequence  $Q_l\in{\mathcal K^n_e}$ such that $V(Q_l)=1$ and
\[
\lim_{l\to\infty}\Phi_\mu(Q_l)
=
\inf\{\Phi_\mu(Q) :\, \text{$V(Q)=1$ and $Q \in \mathcal K^n_e$}\}.
\]
Since the dilation, $\omega_n^{-1/n} B$, of the unit ball has unit
volume, and $\Phi_\mu(\omega_n^{-1/n}B)=-\frac1n\log \omega_n$, it follows
that
\begin{equation}\label{negative-inf}
\lim_{l\to\infty} \Phi_\mu(Q_l) \le -\frac1n\log \omega_n.
\end{equation}

By John's theorem \cite{Joh37}, there exists an ellipsoid $E_l$ centered
at the origin such that
$$
E_l\subset Q_l \subset \sqrt{n}\,E_l.
$$
Let $u_{1,l},\ldots,u_{n,l}\in S^{n-1}$ be the principal
directions of $E_l$ indexed to satisfy
$$
h_{1,l}\leq\cdots\leq h_{n,l}, \qquad\text{where
$h_{i,l}=h_{E_l}(u_{i,l})$,\quad for $i=1,\ldots,n$}.
$$
Next we define the cross-polytope
$$
C_l=[\pm h_{1,l}\,u_{1,l},\ldots,\pm h_{n,l}\,u_{n,l}].
$$
Since $C_l \subset  E_l \subset \sqrt{n} C_l$, we have
\begin{equation*}
C_l\subset Q_l\subset \,n \,C_l.
\end{equation*}
 We deduce from $V(Q_l)=1$  that $V(C_l) \ge n^{-n}$. Thus,
\begin{equation}
\label{hcond0}
 \prod_{i=1}^nh_{i,l}=
\frac{n!V(C_l)}{2^n}\geq \gamma,
\end{equation}
where $\gamma = \frac{n!}{2^n n^n}$.

Suppose that the sequence $\{Q_l\}$ is not bounded. Then $\{C_l\}$ is
not bounded, and thus, for a subsequence,
\begin{equation}\label{blow-up-condition}
\lim_{l\to \infty} h_{n,l} =\infty.
\end{equation}
In view of \eqref{hcond0} and \eqref{blow-up-condition},
applying Lemma~\ref{crosslimit} to $C'_l=\gamma^{\frac{-1}n}C_l$
yields that $\{\Phi_\mu(C'_l)\}$ is not bounded from above. Thus,
$\{\Phi_\mu(Q_l)\}$
is not bounded from above.
This contradicts \eqref{negative-inf}.
Therefore, the sequence $\{Q_l\}$ is bounded. By the Blaschke selection theorem,
$\{Q_l\}$ has a subsequence that converges to an origin-symmetric
convex body $K$.
Thus, the minimization problem (\ref{min-problem}) attains its minimum
at $K$.
\end{proof}

\section{Existence for the even logarithmic Minkowski problem}

Here we prove the sufficiency part of Theorem~\ref{main-theorem}.

\begin{lemma} \label{restriction}
Suppose $n\ge 2$ and $\mu$ is
a finite Borel measure on $S^{n-1}$
that satisfies the subspace concentration condition.
If $\xi$ is a subspace of $\R^n$ for which there is equality in
(\ref{properdef0}), that is
\begin{equation}\label{needname}
 \mu(\xi\cap S^{n-1}) = \frac 1n \, \mu(S^{n-1})\dimension \xi,
\end{equation}
then $\mu$ restricted to $S^{n-1}\cap \xi$ satisfies the subspace
concentration condition.
\end{lemma}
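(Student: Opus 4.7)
\medskip

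\noindent\textbf{Proof plan for Lemma \ref{restriction}.}
Set $m=\dim\xi$ and let $\nu$ denote the restriction of $\mu$ to $S^{n-1}\cap\xi$. The equality hypothesis \eqref{needname} says precisely
$$|\nu|=\nu(S^{n-1})=\mu(\xi\cap S^{n-1})=\tfrac{m}{n}|\mu|.$$
The plan is to translate every statement about $\nu$ into one about $\mu$ via this identity, and then apply the subspace concentration condition enjoyed by $\mu$.

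First I would verify the subspace concentration inequality for $\nu$. Let $\eta\subset\xi$ with $0<\dim\eta<m$. Since $\eta\cap S^{n-1}\subset\xi\cap S^{n-1}$,
$$\nu(\eta\cap S^{n-1})=\mu(\eta\cap S^{n-1})\le \tfrac{\dim\eta}{n}|\mu|=\tfrac{\dim\eta}{m}\cdot\tfrac{m}{n}|\mu|=\tfrac{\dim\eta}{m}|\nu|,$$
the first inequality being the subspace concentration inequality for $\mu$ applied to $\eta$.

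Next, suppose equality $\nu(\eta\cap S^{n-1})=\tfrac{\dim\eta}{m}|\nu|$ holds for some such $\eta\subset\xi$. The display above then gives equality for $\mu$ at $\eta$ too, so the subspace concentration condition for $\mu$ produces a subspace $\eta''$ complementary to $\eta$ in $\R^n$ with $\mu(\eta''\cap S^{n-1})=\tfrac{n-\dim\eta}{n}|\mu|$ and with $\mu$ concentrated on $S^{n-1}\cap(\eta\cup\eta'')$. The natural candidate for a complement of $\eta$ inside $\xi$ is
$$\zeta:=\xi\cap\eta''.$$
Clearly $\eta\cap\zeta=\{0\}$. The main task—and the only nontrivial step—is to prove that $\dim\zeta = m-\dim\eta$, so that $\zeta$ is genuinely complementary to $\eta$ in $\xi$.

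For this, since $\mu$ is concentrated on $\eta\cup\eta''$ and $\eta\cap\eta''=\{0\}$, the identity $\xi\cap(\eta\cup\eta'')=\eta\cup\zeta$ (disjoint union on $S^{n-1}$) combined with the equality hypothesis yields
$$\mu(\zeta\cap S^{n-1})=\mu(\xi\cap S^{n-1})-\mu(\eta\cap S^{n-1})=\tfrac{m-\dim\eta}{n}|\mu|.$$
Then applying the subspace concentration inequality for $\mu$ to the subspace $\eta+\zeta\subset\xi$ gives
$$\tfrac{m}{n}|\mu|=\mu(\eta\cap S^{n-1})+\mu(\zeta\cap S^{n-1})\le\mu((\eta+\zeta)\cap S^{n-1})\le \tfrac{\dim(\eta+\zeta)}{n}|\mu|,$$
so $\dim(\eta+\zeta)\ge m$; since $\eta+\zeta\subset\xi$, equality holds and $\dim\zeta=m-\dim\eta$ as required. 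Finally, translating the measure equality back, $\nu(\zeta\cap S^{n-1})=\tfrac{\dim\zeta}{m}|\nu|$, and $\nu$ is concentrated on $S^{n-1}\cap(\eta\cup\zeta)$ because $\mu$ is concentrated on $S^{n-1}\cap(\eta\cup\eta'')$ and we intersected with $\xi$. This verifies the full subspace concentration condition for $\nu$. The only step I expect to require any care is the dimension-counting argument for $\zeta$; everything else is bookkeeping against the normalization $|\nu|=\frac{m}{n}|\mu|$.
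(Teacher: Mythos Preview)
Your proof is correct and follows essentially the same approach as the paper: reduce the inequality for $\nu$ to the one for $\mu$ via the normalization $|\nu|=\tfrac{m}{n}|\mu|$, and in the equality case take $\zeta=\xi\cap\eta''$ as the complementary subspace inside $\xi$.

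The one place you work harder than necessary is the dimension count for $\zeta$, which you call ``the only nontrivial step'' and prove via a measure argument. In fact it is elementary linear algebra: since $\eta\subset\xi$ and $\eta\oplus\eta''=\R^n$, we have $\xi+\eta''\supseteq\eta+\eta''=\R^n$, so
\[
\dim\zeta=\dim(\xi\cap\eta'')=\dim\xi+\dim\eta''-\dim(\xi+\eta'')=m+(n-\dim\eta)-n=m-\dim\eta,
\]
and $\eta\cap\zeta\subset\eta\cap\eta''=\{0\}$. The paper treats this as obvious and does not spell it out. Your measure-based argument is a valid alternative, but it is not needed.
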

\begin{proof} Let $m=\dimension  \xi$, and let $S^{m-1}=\xi\cap
S^{n-1}$. Then (\ref{needname}) states that
\begin{equation}\label{needname2}
\frac1m\, {\mu(S^{m-1})}=\frac1n\,{{\mu(S^{n-1})}}.
\end{equation}
Suppose $\xi_k \subset \xi$ is a $k$-dimensional subspace. Then, from
the definition of $S^{m-1}$, the fact that $\xi_k\subset \xi$, the
fact that $\mu$ satisfies the subspace concentration condition, and
finally (\ref{needname2}), we have:
\begin{align*}
\mu(\xi_k\cap S^{m-1})&=
\mu(\xi_k\cap (\xi\cap\sn))\\
&= \mu(\xi_k\cap\sn) \\
&\le \frac kn \mu(\sn) \\
&=\frac 1{m} \mu(S^{m-1}) \dimension  \xi_k.
\end{align*}
This not only establishes the subspace concentration inequality for
$\mu$ restricted to $\xi\cap\sn$, but shows that equality in
\[
\mu(\xi_k\cap S^{m-1}) \le \frac 1{m} \mu(S^{m-1}) \dimension  \xi_k,
\]
implies that
\begin{equation}\label{needname3}
\mu(\xi_k\cap\sn)
= \frac kn \mu(\sn).
\end{equation}
But (\ref{needname3}) and the fact that $\mu$ satisfies the subspace
concentration condition yields the existence of a subspace $\xi_{n-k}$
complementary  to $\xi_k$ in $\R^n$ such that $\mu$ is concentrated on
$\sn\cap(\xi_k\cup \xi_{n-k})$. This implies that $\mu$
restricted to $\xi\cap\sn=S^{m-1}$ is concentrated on
$S^{m-1}\cap(\xi_k\cup(\xi_{n-k}\cap \xi))$.
\end{proof}

If $\mu$ is a Borel measure on $S^{n-1}$
and $\xi$ is a proper subspace of  $\R^n$, it will be convenient to write
$\mu_\xi$ for the restriction of $\mu$ to
$\sn\cap \xi$.

\begin{lemma}\label{last}
Let $\xi$ and $\xi'$ be complementary subspaces in $\R^n$ with
$0 < \dimension \xi =m < n$.
Suppose $\mu$ is an even Borel measure on $S^{n-1}$ that is concentrated on
$S^{n-1}\cap (\xi\cup \xi')$, and so that
\[
\mu(\xi\cap S^{n-1}) = \frac {m}n \mu(\sn).
\]
If $\mu_\xi$ and $\mu_{\xi'}$ are cone-volume measures of convex
bodies in the spaces $\xi$ and $\xi'$, then $\mu$ is
the cone-volume measure of a convex body in $\R^n$.
\end{lemma}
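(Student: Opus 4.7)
The plan is to construct $K$ as an amalgam of bodies in $\xi$ and $\xi'$ that intrinsically realize $\mu_\xi$ and $\mu_{\xi'}$. By the hypothesis, choose origin-symmetric convex bodies $K_1\subset \xi$ and $K_2\subset \xi'$ (of the respective intrinsic dimensions $m$ and $n-m$) whose intrinsic cone-volume measures are $\mu_\xi$ and $\mu_{\xi'}$. Then $V_m(K_1)=|\mu_\xi|=\tfrac{m}{n}|\mu|$ and $V_{n-m}(K_2)=|\mu_{\xi'}|=\tfrac{n-m}{n}|\mu|$, so the mass condition $\mu(\xi\cap\sn)=\tfrac{m}{n}|\mu|$ is equivalent to the algebraic compatibility $(n-m)V_m(K_1)=m\,V_{n-m}(K_2)$, which will drive the argument.

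The candidate body is
\[
K(\alpha,\beta)\;=\;\piper_\xi^{-1}(\alpha K_1)\,\cap\,\piper_{\xi'}^{-1}(\beta K_2)\qquad(\alpha,\beta>0),
\]
where $\piper_\xi,\piper_{\xi'}$ are the orthogonal projections onto $\xi$ and $\xi'$. I would first check that $K(\alpha,\beta)\in\cK^n_e$: origin-symmetry and convexity are immediate, boundedness follows from $\xi+\xi'=\R^n$, and the origin is interior because $o$ is interior to each $K_i$ in its own subspace. A generic boundary point of $K(\alpha,\beta)$ has exactly one of the two cylindrical constraints active, so its outer unit normal lies in $\xi\cup\xi'$; hence $V_{K(\alpha,\beta)}$ is concentrated on $(\xi\cup\xi')\cap\sn$, matching the support of $\mu$.

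The heart of the proof is the computation of $V_{K(\alpha,\beta)}$ on each of the two pieces. In the orthogonal case $\xi\perp\xi'$, $K(\alpha,\beta)$ is just the Minkowski sum $\alpha K_1+\beta K_2$, and a straightforward computation using $h_K=h_{\alpha K_1}+h_{\beta K_2}$, the product structure $\nu_K^{-1}(\omega)=\nu_{\alpha K_1}^{-1}(\omega)\times\beta K_2$ for $\omega\subset\xi\cap\sn$, and the volume formula \eqref{volrep} gives
\[
V_K|_{\xi\cap\sn}\;=\;\tfrac{m}{n}\,\alpha^m\beta^{n-m}V_{n-m}(K_2)\,\mu_\xi,\quad
V_K|_{\xi'\cap\sn}\;=\;\tfrac{n-m}{n}\,\alpha^m\beta^{n-m}V_m(K_1)\,\mu_{\xi'}.
\]
Requiring each coefficient to equal $1$ produces two equations for the single unknown $\alpha^m\beta^{n-m}$; these are consistent precisely because of the algebraic compatibility above, and any $(\alpha,\beta)$ in the resulting one-parameter family satisfies $V_{K(\alpha,\beta)}=\mu$.

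The main obstacle is the non-orthogonal case, where the clean product structure is lost. One can still push the computation through by inserting Jacobian factors from the linear isomorphisms $\piper_{\xi'}|_{\xi^\perp}\colon\xi^\perp\to\xi'$ and $\piper_\xi|_{(\xi')^\perp}\colon(\xi')^\perp\to\xi$, whose absolute determinants coincide (both representing a ``sine'' between $\xi$ and $\xi'$), so the two equations for $\alpha^m\beta^{n-m}$ remain consistent. Alternatively, one can apply a $\Phi\in\GL(n)$ that sends a standard orthogonal decomposition of $\R^n$ onto $\xi\oplus\xi'$ and transport the data through the transformation rule $V_{\Phi^t L}(\omega)=|\det\Phi|\,V_L(\langle\Phi\omega\rangle)$, thereby reducing to the orthogonal case just handled.
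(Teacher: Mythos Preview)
Your approach is essentially the paper's: your candidate body $K(\alpha,\beta)=\piper_\xi^{-1}(\alpha K_1)\cap\piper_{\xi'}^{-1}(\beta K_2)$ is literally the same set as the paper's Minkowski sum $D+D'$ (with $D\subset\xi'^\perp$, $D'\subset\xi^\perp$ the oblique shears of $C=\alpha K_1$, $C'=\beta K_2$), and both arguments compute the cone-volume on each piece via a Fubini-type product, with your ``equal Jacobians'' being precisely the single ratio $r=V_m(D)/V_m(C)=V_{n-m}(D')/V_{n-m}(C')$ that the paper invokes. The only difference is organizational: the paper treats the general (non-orthogonal) case in one direct computation rather than first isolating the orthogonal case and then sketching a reduction.
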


\begin{proof} By hypothesis
there exist convex bodies $C$ in $\xi$ and $C'$ in $\xi'$ so that
\[
V_C = a \mu_\xi, \quad V_{C'} = a \mu_{\xi'},
\]
where the positive constant $a$ will be chosen appropriately later.
Construct convex bodies $D$ in $\xi'^\perp$ and $D'$ in $\xi^\perp$ by
\begin{align*}
D&=\{(x+\xi^\perp)\cap \xi'^\perp : x\in C\}, \\
D'&=\{(x+\xi'^\perp)\cap \xi^\perp : x\in C'\}.
\end{align*}
Then $\piper_{\xi}D=C $ and $\piper_{\xi'}D'=C' $. Now,
\[
\frac{ V_m(D)}{ V_m(C)} = \frac{ V_{n-m}(D')}{ V_{n-m}(C')} = r,
\]
where $r$ can be viewed as the reciprocal of the cosine of the angle
between $\xi$ and $\xi'^\perp$. Then
\[
 V_m(D)=r  V_m(C) = rV_C(\sn\cap \xi) =ra \mu_{\xi}(\sn\cap \xi) =ra
\mu(\sn\cap \xi) = ra
\frac{m}n \mu(S^{n-1}).
\]
Similarly,
\[
 V_{n-m}(D')=ra \frac {n-m}n \mu(S^{n-1}).
\]

Observe that
\begin{equation}\label{sum-boundary}
\partial(D+D')=(\partial D +\relinterior D') \cup (\relinterior D + \partial D')
\cup (\partial D + \partial D').
\end{equation}

Consider $\rn$ as the orthogonal sum of $\xi$ and $\xi^\perp$ . Write
$y=(y_1,y_2)\in\rn$ and identify $y_1$ with $(y_1,0)$  and  $y_2$ with
$(0, y_2)$.
Obviously for each $y=(y_1,y_2)\in \partial D$,  we have ${y_1} \in
\partial C$ and  $y=({y_1}+\xi^\perp)\cap \xi'^\perp$.
For the outer normals on $\partial D +\relinterior D'$, we have
$\nu_{D+D'}(y+y')=\nu_{D+D'}(y)$.
These normals are orthogonal to $\xi^\perp \supset D'$ and thus belong
to $S^{n-1}\cap \xi$.
The normal $\nu_{D+D'}(y)$ is also orthogonal to the
$(m-1)$-dimensional support plane of $D$ at $y\in\partial D$ in
$\xi'^\perp$
and thus is orthogonal to the orthogonal projection of the support
plane of $D$ at $y$ onto $\xi$, that is orthogonal to the
$(m-1)$-dimensional support plane of $C$ at ${y_1}\in\partial C$ in
$\xi$.
It follows that $\nu_{D+D'}(y)=\nu_C({y_1})$.
We now see that for
$\cH^{m-1}$-almost all  $y\in \partial D$ and for all $y' \in
\relinterior D'$,
\begin{equation}\label{latest}
\text{$\nu_{D+D'}(y+y') = \nu_C({y_1})$\quad and\quad $ y\cdot \nu_C({y_1}) =
{y_1}\cdot \nu_C({y_1})$,}
\end{equation}
for  $\cH^{m-1}$-almost all  ${y_1}\in\partial C$.

Suppose $\omega \subset S^{n-1}\cap \xi$.  Obviously,
\[
\nu_{D+D'}^{-1}(\omega)\subset \partial D +\relinterior D'.
\]
From the definition of cone-volume measure, \eqref{latest}, and using
the fact that $\nu_C^{-1}(\omega)$ is the orthogonal projection
of $\nu_{D+D'}^{-1}(\omega)$  onto $\xi$, we have
\begin{align*}
V_{D+D'}(\omega) &= \frac1n \int_{\nu_{D+D'}^{-1}(\omega)}
 (y+y')\cdot \nu_{D+D'}(y+y')\, d\cH^{n-1}(y+y') \\
&=   \frac1n \int_{\nu_{D+D'}^{-1}(\omega)}
 y\cdot \nu_C({y_1}) \, d\cH^{n-1}(y+y') \\
&= \frac1n \int_{\nu_{C}^{-1}(\omega)}
 {y_1}\cdot \nu_C({y_1}) \, d\cH^{m-1}({y_1})  V_{n-m}(D') \\
&=\frac mn  V_{n-m}(D') V_C(\omega) \\
&=\frac{m(n-m)}{n^2} ra^2 \mu(\sn) \mu_\xi(\omega).
\end{align*}

Similarly, for $\omega' \subset S^{n-1}\cap \xi'$,
\[
V_{D+D'}(\omega')=\frac{m(n-m)}{n^2} ra^2 \mu(\sn) \mu_{\xi'}(\omega').
\]
Now choose $a$ so that
\[
\frac{m(n-m)}{n^2} ra^2 \mu(\sn)=1.
\]
Then
\begin{align*}
V_{D+D'}(\omega) &= \mu_\xi(\omega), \quad\text{for}\  \omega \subset
S^{n-1}\cap \xi, \\
V_{D+D'}(\omega') &= \mu_{\xi'}(\omega'), \quad\text{for}\ \omega' \subset
S^{n-1}\cap \xi'.
\end{align*}
Using  \eqref{sum-boundary} we see that
the surface area measure $S_{D+D'}$ is concentrated on
$S^{n-1}\cap(\xi\cup\xi')$,
and thus the cone-volume measure $V_{D+D'}$ is as well. From this and
the fact that
 $\mu$ is concentrated on $S^{n-1}\cap (\xi\cup \xi')$,
we conclude
\[
V_{D+D'} = \mu.
\]
\end{proof}

\begin{theo}
\label{sufficiency} Suppose $n\ge 1$ and $\mu$ is
a non-zero even finite Borel measure on $S^{n-1}$
that satisfies the subspace concentration condition. Then $\mu$
is the  cone-volume measure of
an origin-symmetric convex body in $\R^n$.
\end{theo}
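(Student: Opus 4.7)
The plan is to induct on the dimension $n$. The base case $n=1$ is immediate: every non-zero even Borel measure on $S^0=\{\pm 1\}$ has the form $c(\delta_1+\delta_{-1})$ with $c>0$, and the symmetric interval $[-c,c]\subset\R$ has cone-volume measure exactly $c(\delta_1+\delta_{-1})$. So I assume the theorem for every dimension strictly less than a given $n\ge 2$, and prove it for $n$.

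I would then split on whether the hypothesis is satisfied \emph{strictly}. If $\mu$ satisfies the strict subspace concentration inequality, Theorem~\ref{logmin0strict} provides an origin-symmetric minimizer $K_0\in\mathcal K^n_e$ of $\Phi_\mu$ subject to $V(K_0)=|\mu|$, and Lemma~\ref{minimization-existence} immediately converts this into the equality $V_{K_0}=\mu$. This generic case contains no new work beyond invoking the results of the previous two sections.

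The interesting case is when equality holds in (\ref{properdef0}) for some subspace $\xi$ with $0<m:=\dim\xi<n$. The subspace concentration \emph{condition} then supplies a complementary subspace $\xi'$ on which $\mu$ is jointly concentrated with $\xi$, and counting mass forces $\mu(\xi'\cap\sn)=\frac{n-m}{n}|\mu|$. My plan is to restrict $\mu$ to each of these subspaces and recurse: by Lemma~\ref{restriction} the restrictions $\mu_\xi$ and $\mu_{\xi'}$ themselves satisfy the subspace concentration condition inside $\xi$ and $\xi'$, and they are non-zero (because $m,n-m>0$) and even. Since $m$ and $n-m$ are both strictly less than $n$, the inductive hypothesis yields origin-symmetric convex bodies $C\subset\xi$ and $C'\subset\xi'$ realizing $\mu_\xi$ and $\mu_{\xi'}$ as their cone-volume measures, and Lemma~\ref{last} glues $C$ and $C'$ into a single convex body in $\R^n$ with cone-volume measure equal to $\mu$.

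The one point that will require explicit verification, rather than a direct citation, is that the body produced by Lemma~\ref{last} is actually origin-symmetric and not merely convex. The lifts $D\subset\xi'^\perp$ and $D'\subset\xi^\perp$ used in that construction are the images of $C$ and $C'$ under the linear maps $x\mapsto(x+\xi^\perp)\cap\xi'^\perp$ and $x\mapsto(x+\xi'^\perp)\cap\xi^\perp$ respectively, and these linear maps preserve central symmetry; since a Minkowski sum of two origin-symmetric bodies is origin-symmetric, $D+D'$ automatically lies in $\mathcal K^n_e$. The only real obstacle, therefore, is bookkeeping: checking that the two cases really are exhaustive, that the recursion strictly decreases the dimension (it does, because equality can only occur on a \emph{proper} subspace), and that the hypotheses of Lemmas~\ref{restriction} and~\ref{last} hold verbatim. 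The heavy lifting — variational existence and the structure of equality — is done entirely by the results proved earlier in the paper.
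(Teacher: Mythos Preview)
Your proposal is correct and follows essentially the same approach as the paper: split into the strict case (handled by Theorem~\ref{logmin0strict} and Lemma~\ref{minimization-existence}) and the equality case (handled by induction on dimension via Lemmas~\ref{restriction} and~\ref{last}). The only organizational differences are that the paper treats the strict case outside the induction and anchors the induction at $n=2$ with an explicit parallelogram, whereas you fold everything into a single induction anchored at $n=1$; your explicit remark that $D+D'$ is origin-symmetric fills a point the paper leaves implicit.
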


\begin{proof} We first assume that strict inequality holds
 in (\ref{properdef0}) for every  linear subspace $\xi$ such that
 $0<  \dimension \xi <n$. By Theorem \ref{logmin0strict},
 the minimization problem (\ref{min-problem}) has a solution.
 Thus, by Lemma \ref{minimization-existence}, the measure $\mu$
 is a cone-volume measure of an origin-symmetric convex body in $\rn$.

Now suppose that equality holds in (\ref{properdef0}); i.e.,
\begin{equation}\label{eq1}
 \mu(\xi\cap S^{n-1}) = \frac 1n \, \mu(S^{n-1})\dimension \xi,
\end{equation}
for some linear subspace $\xi$
with  $0<\dimension \xi<n$.
In this case, from the definition of a measure that satisfies the
subspace concentration condition, there exists a linear
subspace $\xi'$  complementary to $\xi$ (obviously of dimension
$n-\dimension  \xi$) such that
\begin{equation}\label{eq2}
 \mu(\xi'\cap S^{n-1}) = \frac 1n \, \mu(S^{n-1})\dimension \xi'.
\end{equation}

If it is not the case that strict inequality holds
 in (\ref{properdef0}) for every  linear subspace $\xi$ such that
 $0<  \dimension \xi <n$, then
we proceed by induction on the dimension of the ambient space. We
start with $n=2$. In this case we must
have $\dimension  \xi = 1$ and $\mu(\xi\cap S^1)=\mu(S^1)/2$.  If we
write $\xi\cap S^1=\{u_0,-u_0 \}$ and recall that $\mu$ is even, we see
that $\mu(\{u_0\})=\mu(\{-u_0\})=\mu(S^1)/4$. Since  $\dimension  \xi = 1$, we
have $\dimension \,\xi' =1$, and if we write $\xi'\cap
S^1=\{u'_0,-u'_0 \}$, and recall that $\mu$ is even, it follows that
$\mu(\{u'_0\})=\mu(\{-u'_0\})=\mu(S^1)/4$. Thus the parallelogram (centered at
the origin) whose sides are perpendicular to $u_0, u'_0$ and whose
area is $\mu(S^1)$ is the desired convex body in $\R^2$.

We now assume that $n\ge 3$ and that the existence has been
established for all dimensions less than $n$.
From Lemma \ref{restriction}, applied to the situations described by
(\ref{eq1}) and  (\ref{eq2}),  it follows that the restrictions of
$\mu$ to
$\xi\cap S^{n-1}$ and to $\xi'\cap S^{n-1}$ satisfy the subspace
concentration conditions
in $\xi$ and $\xi'$, respectively.
Therefore,
 there exist an origin-symmetric $m$-dimensional convex body $C$ in $\xi$,
and an origin-symmetric  $(n-m)$-dimensional convex body $C'$ in $\xi'$
such that the  restrictions of $\mu$ to
$\xi\cap S^{n-1}$ and to $\xi'\cap S^{n-1}$ are
 cone-volume measures of $C$ and $C'$,
respectively.  Lemma \ref{last} will now yield the desired result.
\end{proof}

{\bf Acknowledgement} The authors thank the referees and Professor
Rolf Schneider for their very helpful comments.


\begin{thebibliography}{10}



\bibitem{Alex42}
A.D. Alexandrov, \emph{Existence and uniqueness of a convex surface
with a given integral curvature}, C. R.
(Doklady) Acad. Sci.  USSR (N.S.)
\textbf{35} (1942), 131--134.


\bibitem{And99inv}
B. Andrews,
\emph{Gauss curvature flow: the fate of the rolling stones},
Invent. Math. \textbf{138} (1999), 151--161.

\bibitem{And03jams}
B. Andrews,
\emph{Classification of limiting shapes for isotropic curve flows},
J. Amer. Math. Soc. \textbf{16} (2003), 443--459.

\bibitem{BGMN8}
F. Barthe, O. Guedon, S. Mendelson, and A. Naor,
\emph{A probabilistic approach to the geometry of the $l_p^n$-ball},
Ann. of Probability \textbf{33} (2005), 480--513.

\bibitem{Caf90Ann}
L. Caffarelli,
\emph{Interior $W^{2,p}$-estimates for solutions of the Monge-Amp\`ere
equation},
Ann. of Math. \textbf{131} (1990), 135--150.



\bibitem{CamGro02adv}
S.~Campi and P.~Gronchi,
\emph{The {$L\sp p$}-{B}usemann-{P}etty centroid inequality},
Adv. Math. \textbf{167} (2002), 128--141.




\bibitem{Chen06adv}
W.~Chen,
\emph{$L\sb p$ Minkowski problem with not necessarily positive data},
Adv. Math.  \textbf{201}  (2006), 77--89.

\bibitem{ChengYau}
S.-Y.~Cheng and S.-T.~Yau,
\emph{On the regularity of the solution of the $n$-dimensional
Minkowski problem},
Comm. Pure Appl. Math.
\textbf{29} (1976) 495--516.

\bibitem{Chou85Com}
K. S. Chou,
\emph{Deforming a hypersurface by its Gauss-Kronecker curvature},
Comm. Pure Appl. Math. \textbf{38} (1985), 867--882.

\bibitem{ChoWan06}
K.-S.~Chou and X.-J.~Wang,
\emph{The {$L\sb p$}-{M}inkowski problem and the {M}inkowski problem
in centroaffine geometry},
Adv. Math. \textbf{205} (2006), 33--83.

\bibitem{Chow85jdg}
B. Chow,
\emph{Deforming convex hypersurfaces by the $n$th root of the Gaussian
curvature},
J. Differential Geom. \textbf{22} (1985), 117--138.



\bibitem{CLYZ}
A.~Cianchi, E.~Lutwak, D.~Yang, and G.~Zhang,
\emph{Affine Moser-Trudinger and Morrey-Sobolev inequalities},
Calc. Var. Partial Differential Equations \textbf{36} (2009), 419--436.

\bibitem{ColFim10}
A. Colesanti, M. Fimiani,
\emph{The Minkowski problem for torsional rigidity},
Indiana. Univ. Math. J.  \textbf{59} (2010), 1013--1039.

\bibitem{Fir74}
W. Firey,
\emph{Shapes of worn stones},
Mathematika \textbf{21} (1974), 1--11.


\bibitem{GagHam86jdg}
M. Gage and R. Hamilton,
\emph{The heat equation shrinking convex plane curves},
J. Differential Geom. \textbf{23} (1986), 69--96.





\bibitem{Gar95}
R.J.~Gardner,
\emph{Geometric {T}omography},
2nd edition, Encyclopedia of Mathematics and its Applications,
Cambridge University Press, Cambridge,
 2006.


 \bibitem{Gar02}
 R.J.~Gardner,
 \emph{The Brunn-Minkowski inequality},
 Bull. Amer. Math. Soc.  \textbf{39} (2002), 355--405.

 \bibitem{GarZha}
 R.J.~Gardner and G.~Zhang,
 \emph{Affine inequalities and radial mean bodies},
 Amer. J. Math. \textbf{120} (1998), 505--528.



 \bibitem{GM18}
M.~Gromov and V.D.~ Milman,
\emph{Generalization of the spherical isoperimetric inequality for
uniformly convex Banach Spaces},
Composito Math. \textbf{62} (1987), 263--282.







\bibitem{Gruberbook}
P.M.~Gruber,
\emph{Convex and discrete geometry},
Grundlehren der Mathematischen Wissenschaften, \textbf{336}. Springer,
Berlin, 2007.

\bibitem{GuaGua02Ann}
B. Guan and P. Guan,
\emph{Convex hypersurfaces of prescribed curvatures},
Ann. Math. \textbf{156} (2002), 655--673.


\bibitem{GuaLin04}
P.~Guan and C.-S.~Lin,
\emph{On equation
$\det(u_{i\!j}+\delta_{i\!j}u)=u^pf$ on
 {$S^n$}}, preprint No. 2000-7, NCTS in
Tsing-Hua University, 2000.

\bibitem{GuaMa03Inv}
P. Guan and X. Ma,
\emph{The Christoffel-Minkowski problem I: Convexity of solutions of a
Hessian equation},
Invent. Math. \textbf{151} (2003), 553--577.



\bibitem{HLYZ10}
C. Haberl, E. Lutwak, D. Yang, G. Zhang,
\emph{The even Orlicz Minkowski problem},
Adv. Math.  \textbf{224} (2010), 2485--2510.


\bibitem{HS09jdg}
C. Haberl and F. Schuster,  \emph{General $L_p$ affine isoperimetric
inequalities}, J. Differential Geom.  \textbf{83} (2009), 1--26.


\bibitem{HS09jfa}
C. Haberl and F. Schuster,
\emph{Asymmetric affine $L_p$ Sobolev inequalities},
J. Funct. Anal. \textbf{257} (2009), 641--658.




\bibitem{HSX11ma}
C. Haberl, F. Schuster, and J. Xiao,
\emph{An asymmetric affine P\' olya-Szeg\H{o} principle},
Math. Ann. \textbf{352} (2012), 517-542.





\bibitem{HLL06}
B. He, G. Leng, and K. Li,
\emph{Projection problems for symmetric polytopes},
Adv. Math. \textbf{207} (2006), 73--90.

\bibitem{HuMaShe04}
C. Hu, X. Ma, and C. Shen,
\emph{On the Christoffel-Minkowski problem of Firey's $p$-sum},
Calc. Var. Partial Differential Equations \textbf{21} (2004), 137--155.

\bibitem{HLYZ05dcg}
D.~Hug, E.~Lutwak, D.~Yang, and G.~Zhang,
\emph{On the $L\sb p$ Minkowski problem for polytopes},
Discrete Comput. Geom. \textbf{33} (2005), 699--715.

\bibitem{Jer96acta}
D. Jerison,
\emph{A Minkowski problem for electrostatic capacity},
Acta Math. \textbf{176} (1996), 1--47.


\bibitem{Jiang}
M. Y. Jiang,
\emph{Remarks on the 2-dimensional $L_p$-Minkowski problem},
Adv. Nonlinear Stud. \textbf{10} (2010), 297--313.


\bibitem{Joh37}
F. John,
\emph{Polar correspondence with respect to a convex region},
Duke Math. J. \textbf{3}  (1937),  355--369.




\bibitem{Klain}
D.~Klain,
\emph{The Minkowski problem for polytopes},
Adv. Math. \textbf{185} (2004), 270--288.

\bibitem{Lew38}
H. Lewy,
\emph{On differential geometry in the large, I, (Minkowski problem)},
Trans. Amer. Math. Soc. \textbf{43} (1938), 258--270.


\bibitem{Lud03}
M.~Ludwig,
\emph{Ellipsoids and matrix-valued valuations},
Duke Math. J. \textbf{119} (2003), 159--188.


\bibitem{Lud10}
M.~Ludwig,
\emph{General affine surface areas},
Adv. Math.
\textbf{224} (2010), 2346--2360.



\bibitem{LR10}
M.~Ludwig and M.~Reitzner,
\emph{A classification of $\SL(n)$ invariant valuations},
Ann. of Math.  \textbf{172} (2010), 1219--1267.



\bibitem{Lut93jdg}
E.~Lutwak,
\emph{{The Brunn-Minkowski-Firey theory. I. Mixed volumes and the
Minkowski problem}},
J. Differential Geom. \textbf{38} (1993), 131--150.

\bibitem{LutOli95}
E.~Lutwak and V.~Oliker,
\emph{{On the regularity of solutions to a generalization of the
Minkowski problem}}, J. Differential Geom. \textbf{41}
 (1995), 227--246.

\bibitem{LutYanZha00jdg}
E.~Lutwak, D.~Yang, and G.~Zhang,
\emph{${L}\sb p$ affine isoperimetric inequalities},
J. Differential Geom. \textbf{56} (2000), 111--132.

\bibitem{LutYanZha00duke}
E.~Lutwak, D.~Yang, and G.~Zhang,
\emph{A new ellipsoid associated with convex bodies},
Duke Math. J.
\textbf{104} (2000), 375--390.

\bibitem{LutYanZha02duke}
E.~Lutwak, D.~Yang, and G.~Zhang,
\emph{{The {C}ramer-{R}ao inequality for star bodies}},
Duke Math. J.
\textbf{112} (2002), 59--81.

\bibitem{LutYanZha02jdg}
E.~Lutwak, D.~Yang, and G.~Zhang,
\emph{Sharp affine {$L\sb p$} {S}obolev inequalities},
J. Differential Geom.
\textbf{62} (2002), 17--38.

\bibitem{LutYanZha04tams}
E.~Lutwak, D.~Yang, and G.~Zhang,
\emph{{On the $L\sb p$-Minkowski problem}},
Trans. Amer. Math. Soc. 356 (2004),  no. 11, 4359--4370.

\bibitem{LutYanZha05jdg}
E.~Lutwak, D.~Yang, and G.~Zhang,
\emph{Volume inequalities for subspaces of ${L}_p$},
J. Differential Geom.
\textbf{68} (2004), 159--184.



\bibitem{LutYanZha06}
E.~Lutwak, D.~Yang, and G.~Zhang,
\emph{Optimal {S}obolev norms and the {$L\sp p$} {M}inkowski problem},
 Int. Math. Res. Not.
 \textbf{2006}, Art. ID 62987, 21 pp.





\bibitem{LutZha97jdg}
E.~Lutwak and G.~Zhang,
\emph{{Blaschke-Santal\'o inequalities}},
J. Differential Geom.
\textbf{47} (1997), 1--16.

\bibitem{Mink1897}
H. Minkowski, \emph{Allgemeine Lehrs\"atze \"uber die convexen Polyeder},
 Nachr. Ges. Wiss. G\"ottingen,
 198--219.
 Gesammelte Abhandlungen, Vol. II, Teubner, Leipzig, 1911, pp. 103--121.



\bibitem{N45}
A. Naor,
\emph{The surface measure and cone measure on the sphere of $l^n_p$},
Trans. Amer. Math. Soc.
\textbf{359} (2007), 1045--1079.

\bibitem{N46}
A. Naor and D. Romik,
\emph{Projecting the surface measure of the sphere of $l^n_p$} ,
Ann. Inst. H. Poincar\'e Probab. Statist.
\textbf{39} (2003), 241--261.




\bibitem{Nir53}
L. Nirenberg,
\emph{The Weyl and Minkowski problems in differential geometry in the large},
Comm. Pure and Appl. Math.
\textbf{6} (1953), 337--394.

\bibitem{Paouris06a}
G.~Paouris,
\emph{Concentration of mass on convex bodies},
 Geom. Funct. Anal.
 \textbf{16} (2006),
 1021--1049.

\bibitem{PW12}
G.~Paouris and E. Werner,
\emph{Relative entropy of cone measures and $L_p$ centroid bodies},
Proc. London Math. Soc.
 \textbf{104} (2012),
 253--286.



\bibitem{Pogorelovbook}
A.V.~Pogorelov,
\emph{The Minkowski multidimensional problem},
V.H. Winston \& Sons, Washington, D.C., 1978.



\bibitem{Sch93}
R.~Schneider,
\emph{Convex bodies: the {B}runn-{M}inkowski theory},
 Encyclopedia of Mathematics and its Applications, Cambridge
 University Press, Cambridge, 1993.



\bibitem{Sta02}
A.~Stancu,
\emph{The discrete planar {$L\sb 0$}-{M}inkowski problem},
Adv. Math. \textbf{167} (2002), 160--174.

\bibitem{Sta03}
A.~Stancu,
\emph{On the number of solutions to the discrete two-dimensional $L\sb
0$-Minkowski problem},
Adv. Math. \textbf{180} (2003), 290--323.

\bibitem{Sta12}
A.~Stancu,
\emph{Centro-affine invariants for smooth convex bodies},
 Int. Math. Res. Not.
 \textbf{2012},
 doi:10.1093/imrn/rnr110.





\bibitem{Tho96}
A.~C.~Thompson,
\emph{Minkowski geometry},
Encyclopedia of Mathematics and its
 Applications, Cambridge University Press, Cambridge, 1996.

\bibitem{Uma03}
V.~Umanskiy,
\emph{On solvability of two-dimensional $L_p$-Minkowski problem},
Adv. Math. \textbf{180} (2003), 176--186.


\bibitem{Xio}
G.~Xiong,
\emph{Extremum problems for the cone volume functional for convex polytopes},
Adv. Math.
\textbf{225} (2010), 3214--3228.



\bibitem{Zhang99}
G.~Zhang,
\emph{The affine Sobolev inequality},
J. Differential Geom.
\textbf{53} (1999), 183--202.


\end{thebibliography}
\end{document}